 \gdef\texorpdfstring#1#2{#1}
\newtheorem{theorem}{Theorem}[section]
\newtheorem{corollary}[theorem]{Corollary}
\newtheorem{proposition}[theorem]{Proposition}
\theoremstyle{definition}
\newtheorem{definition}[theorem]{Definition}
\newtheorem{example}[theorem]{Example}
\newtheorem{notation}[theorem]{Notation}
\theoremstyle{remark}
\newtheorem{remark}[theorem]{Remark}
\newtheorem{remark*}[theorem]{Remark}
\NewDocumentCommand\term{so}{\IfValueTF{#2}\term\emph}
\gdef\Oof{\DoOof{\mathcal O}}
\NewDocumentCommand\DoOof{ms}{\IfBooleanTF{#2}{#1}{\DoOofsnd{#1}}}
\NewDocumentCommand\DoOofsnd{mt_}
{\IfBooleanTF{#2}{\DoOofthd{#1_}}{\DoOofthd{#1}\relax}}
\NewDocumentCommand\DoOofthd{mmO{r}t^t-t+m}
{#1{#2}(#3^{\IfBooleanTF{#5}-\relax#7})}
\gdef\erA{B}%
\gdef\erAt{\DoATensor[\hspace{-.075em}](t){B}}%
\gdef\schwg{\g[m]}
\gdef\eukg{\delta}
\gdef\PiT{\DoATensor\Pi}
\NewDocumentCommand\linmomentum{d<>ot_t^}
{\IfBooleanTF{#3}{(\DoATensor{P_{\text{ADM}}}<#1>[#2])_}
 {\IfBooleanTF{#4}{(\DoATensor{P_{\text{ADM}}}<#1>[#2])^}{\DoATensor{\vec P_{\text{ADM}}}<#1>[#2]}}}
\let\g\metric
\gdef\schwN{\DoATensor[\hspace{-.075em}](m){N}}
\gdef\Tex{\DoATensor[\hspace{+0.05em}](m,\vec{z}){T}}
\gdef\lambdaex{\DoATensor[\hspace{-.05em}](m,\vec{z}){\lambda}}
\gdef\Tnormal{\DoATensor[\hspace{-.05em}](T){n}}
\gdef\slice{\DoATensor[\hspace{-.15em}](T){M^{3}}}
\gdef\devi{\DoATensor[\hspace{-.1em}](T){B}}					%
\gdef\K{\DoATensor[\hspace{-.1em}](T){K}}					%
\gdef\Ht{\DoATensor[\hspace{-.1em}](T){H}}					%
\gdef\scalar{\DoATensor{R}}
\let\sc\scalar
\let\k\II
\gdef\kY{\DoATensor[\!]{\textrm Y}}
\gdef\normal{\DoATensor[\!]{\nu}}
\gdef\laplace{\DoATensor\Delta}
\gdef\div{\DoATensor[\hspace{-.05em}]{\text{div}}}
\gdef\tr{\DoATensor[\hspace{-.05em}]{\text{tr}}}
\gdef\vol{\DoATensor{\text{V}}}
\gdef\Ag{\DoATensor<\hspace{-.05em}>A}
\gdef\momentum\Pi
\gdef\momenden{J}
\NewDocumentCommand\M{O{3}t^}{\IfBooleanTF{#2}{M^}{M^{#1}}}
\gdef\Mz{\DoATensor<\hspace{-.05em}>\Sigma}
\NewDocumentCommand\centerz{d<>ost_t^}
{\IfBooleanTF{#4}{\centerzsnd<#1>[#2]_}
 {\IfBooleanTF{#5}{\centerzsnd<#1>[#2]^}
  {\DoATensor<\hspace{-.05em}>{\vec z}<#1>[#2]}}}
\NewDocumentCommand\centerzsnd{d<>omm}
{\ifthenelse{\equal{#4}{\text{ADM}}}
 {\DoATensor<\hspace{-.05em}>{\vec z_{\text{ADM}}}<#1>[#2]}
 {\ifthenelse{\equal{#4}{ADM}}
  {\DoATensor<\hspace{-.05em}>{\vec z_{\text{ADM}}}<#1>[#2]}
  {\ifthenelse{\equal{#4}{\text{CMC}}}
   {\DoATensor<\hspace{-.05em}>{\vec z_{\text{CMC}}}<#1>[#2]}
   {\ifthenelse{\equal{#4}{CMC}}
    {\DoATensor<\hspace{-.05em}>{\vec z_{\text{CMC}}}<#1>[#2]}
    {\ifthenelse{\equal{#4}{\text{N}}}
     {\DoATensor<\hspace{-.05em}>{\vec z_{\text N}}<#1>[#2]}
     {\ifthenelse{\equal{#4}{N}}
       {\DoATensor<\hspace{-.05em}>{\vec z_{\text N}}<#1>[#2]}
       {\ifthenelse{\equal{#4}{\text{E}}}
        {\DoATensor<\hspace{-.05em}>{\vec z_{E}}<#1>[#2]}
        {\ifthenelse{\equal{#4}{E}}
         {\DoATensor<\hspace{-.05em}>{\vec z_{\text E}}<#1>[#2]}
         {{\DoATensor<\hspace{-.05em}>{z}<#1>[#2]#3{#4}}}}}}}}}}}
\NewDocumentCommand\d{s}{\IfBooleanTF{#1}\relax{\mathop{}\!}\mathrm d}%
\newcommand\R{\mathds{R}}																						
\newcommand\Ck{\mathcal C}
\newcommand\Hk{\textrm H}
\newcommand\sphere{\mathbb S}																				
\newcommand\ve{\varepsilon}																					
\DeclareMathAlphabet{\mathcal}{OT1}{pzc}{m}{n}
\mathchardef\ordinarycolon\mathcode`\:
\mathchardef\ordinaryequal\mathcode`\=
\NewDocumentCommand\mathcolon{t=}
{\IfBooleanTF{#1}\coloneqq\ordinarycolon}
\NewDocumentCommand\mathequal{t:}
{\IfBooleanTF{#1}\eqqcolon\ordinaryequal}
\def\thesupercolon
\gdef\coloneqq{\thesupercolon\ordinaryequal}
\gdef\eqqcolon{\ordinaryequal\thesupercolon}
\def\hyphenateWholeString #1{\xHyphenate#1$\wholeString\unskip}			
\def\xHyphenate#1#2\wholeString {\if#1$
\else\transform{#1}
\takeTheRest#2\ofTheString\fi}
\def\takeTheRest#1\ofTheString\fi
\def\transform#1{#1\hskip 0pt plus 1pt}
\renewcommand*\showkeyslabelformat[1]{
 \let\raggedleftmarginnoteold\raggedleftmarginnote
 \let\raggedrightmarginnoteold\raggedrightmarginnote
 \renewcommand*{\raggedleftmarginnote}
	{\raggedleftmarginnoteold\centering\tiny\linespread0\selectfont}
 \renewcommand*{\raggedrightmarginnote}
	{\raggedrightmarginnoteold\centering\tiny\linespread0\selectfont}
 \marginnote[\vspace{-2.25em}
 \fparcolorbox\correctionboxlength{black}{white}
 {\noindent\hyphenateWholeString{#1}}]{}
 \let\raggedleftmarginnote\raggedleftmarginnoteold
 \let\raggedrightmarginnote\raggedrightmarginnoteold}
\NewDocumentCommand\DoATensor{sD<>{}O{}d<>d()md<>os}
{\IfBooleanTF{#1}{#6}
 {\IfValueTF{#4}{\IfValueTF{#7}{\DoATensor<#2>[#3](#5){#6}<#7>[#8]}
											{\DoATensor<#2>[#3](#5){#6}<#4>[#8]}}
  {\IfValueTF{#5}{\IfValueTF{#8}{\DoATensor<#2>[#3]{#6}<#7>[#8]}
											{\DoATensor<#2>[#3]{#6}<#4>[#5]}}
   {\DoATensor@start{#6}
    {\IfValueTF{#7}
     {\IfValueTF{#8}{\TensorIndex*[#7#2]<#8#3>{#6}}
	    {\TensorIndex*[#7#2]{#6}}}
	   {\IfValueTF{#8}{\TensorIndex*<#8#3>{#6}}
	    {\TensorIndex*{#6}}}}}}}}
\NewDocumentCommand\DoATensor@start{smm}
{\DoATensor@checkforsubindex{#1}{#2}{#3}}
\NewDocumentCommand\DoATensor@checkforsubindex{mmmt_}
{\IfBooleanTF{#4}{\DoATensor@index{#2}{#3}_}
 {\DoATensor@checkforsupindex{#1}{#2}{#3}}}
\NewDocumentCommand\DoATensor@checkforsupindex{mmmt^}
{\IfBooleanTF{#4}{\DoATensor@index{#2}{#3}^}
 {\IfBooleanTF{#1}{\mathop{#3}}{#3}}}
\NewDocumentCommand\DoATensor@index{mmmm}
{\DoATensor@start*{#1}{#2\vphantom{#1}#3{#4}}}
\NewDocumentCommand\TensorIndex{ssod<>msod<>}
{\IfBooleanTF{#1}
 {\TensorIndex[#3]<#4>{#5}*[#7]<#8>}
 {\IfBooleanTF{#2}
  {\mathop{\TensorIndex[#3]<#4>{#5}*[#7]<#8>}}
  {\IfBooleanTF{#6}
   {\Tensor@Indexparse@@left{\IfValueTF{#3}{_{#3}}\relax}{\IfValueTF{#4}{^{#4}}\relax}{#5}%
    #5
    \IfValueTF{#7}{_#7}\relax\IfValueTF{#8}{^#8}\relax}
   {\IfValueTF{#7}{\mathop{\TensorIndex[#3]<#4>{#5}*[#7]<#8>}}
    {\IfValueTF{#8}{\mathop{\TensorIndex[#3]<#4>{#5}*[#7]<#8>}}
	   {\IfValueTF{#3}{\mathop{}\TensorIndex[#3]<#4>{#5}*[#7]<#8>}
	    {\IfValueTF{#4}{\mathop{}\TensorIndex[#3]<#4>{#5}*[#7]<#8>}{\mathop{#5}}}}}}}}}
\newdimen\Tensor@widthmax
\newdimen\Tensor@widthfirst
\newdimen\Tensor@widthsecond
\NewDocumentCommand\Tensor@Indexparse@@left{smmm}
{\IfBooleanTF{#1}
 {\setlength\Tensor@widthfirst{\widthof{\ensuremath{\vphantom{#4}#2}}}%
  \setlength\Tensor@widthsecond{\widthof{\ensuremath{\vphantom{#4}#3}}}%
  \setlength\Tensor@widthmax{\maxof\Tensor@widthfirst\Tensor@widthsecond}
  \kern\Tensor@widthmax
	\kern-\Tensor@widthfirst%
	\vphantom{#4}#2
	\kern-\Tensor@widthsecond%
	\vphantom{#4}#3
  \kern-\scriptspace}
 {\ifthenelse{\equal{#2}\relax}
  {\ifthenelse{\equal{#3}\relax}\relax
	 {\Tensor@Indexparse@@left*{#2}{#3}{#4}}}
	{\Tensor@Indexparse@@left*{#2}{#3}{#4}}}}
\NewDocumentCommand\sinterval{smsm}
{\underline{\IfBooleanTF{#1}[\({{#2}};{{#4}}\IfBooleanTF{#3}]\)}}
\NewDocumentCommand\interval{st-mst-m}
{\IfBooleanTF{#1}{\leftonlyinbigmath[}(
 \IfBooleanTF{#2}{{-}}\relax#3,\IfBooleanTF{#5}{{-}}\relax#6
 \IfBooleanTF{#4}{\rightonlyinbigmath]})}
\newcommand\leftonlyinbigmath{\ifbracketer\goodleft\relax}
\newcommand\rightonlyinbigmath{\ifbracketer\goodright\relax}
\gdef\{{\oldlbrace}
\gdef\}{\oldrbrace}
\let\lbracket(
\let\rbracket)
\gdef\({\mathchar'50}
\gdef\){\mathchar'51}
\let\oldlbrace\lbrace
\let\oldrbrace\rbrace
\let\oldvert\vert
\let\oldVert\Vert
\let\oldmath\[
\let\oldmathend\]
\def\ismath{}
\renewcommand\[{\oldmath\def\ismath{1}}
\renewcommand\]{\oldmathend\def\ismath{}}
\let\oldcases\cases
\def\cases{\let\tmp\lbrace
 \def\lbrace{\{}\oldcases
 \let\lbrace\tmp}																										%
\gdef\goodleft{\mathopen{}\mathclose\bgroup\left}
\gdef\goodright{\aftergroup\egroup\right}
 \global\def\resetMathstrut@{
  \setbox\z@\hbox{
   \mathchardef\@tempa\mathcode`\[\relax														%
   \def\@tempb##1"##2##3{\the\textfont"##3\char"}
   \expandafter\@tempb\meaning\@tempa \relax												%
  }
 \ht\Mathstrutbox@\ht\z@ \dp\Mathstrutbox@\dp\z@}}
 \newcommand\ifbracketer[2]{\if@display #1 \else \ifthenelse{\equal{\ismath}{}}{#2}{#1} \fi}%
 \renewcommand\lvert{\ifbracketer{\goodleft\oldvert\renewcommand\vert\rvert}\oldvert}%
 \renewcommand\rvert{\ifbracketer{\goodright\oldvert\renewcommand\vert\lvert}\oldvert}%
 \renewcommand\vert{\ifbracketer\lvert\oldvert}%
 \renewcommand\lVert{\ifbracketer{\goodleft\oldVert\renewcommand\Vert\rVert}\oldVert}%
 \renewcommand\rVert{\ifbracketer{\goodright\oldVert\renewcommand\Vert\lVert}\oldVert}%
 \renewcommand\Vert{\ifbracketer\lVert\oldVert}%
 \renewcommand\lbrace{\ifbracketer{\goodleft\oldlbrace\renewcommand\brace\rbrace}\oldlbrace}
 \renewcommand\rbrace{\ifbracketer{\goodright\oldrbrace\renewcommand\brace\lbrace}\oldrbrace}
 \newcommand\BVert{\ifbracketer\lBVert{\oldVert\hspace{-.1em}\oldvert}}%
 \newcommand\lBVert{\ifbracketer{\goodleft\oldVert\hspace{-.1em}\goodleft\oldvert\renewcommand\BVert\rBVert}\BVert}
 \newcommand\rBVert{\ifbracketer{\goodright\oldVert\hspace{-.1em}\goodright\oldvert\renewcommand\BVert\lBVert}\BVert}
\newcommand\phantomas[3][c]{
	\ifmmode
		\makebox[\widthof{$#2$}][#1]{$#3$}
	\else
		\makebox[\widthof{#2}][#1]{#3}
	\fi
}
\let\oldint\int\gdef\int{\oldint\limits}
\let\oldfint\fint\gdef\fint{\oldfint\limits}
\title[Explicit Riem.\ mfds with unexpectedly behaving center of mass]{Explicit Riemannian manifolds with\\unexpectedly behaving center of mass}%
 \author{Carla Cederbaum}
 \address{Mathematisches Institut\\Universit\"at T\"ubingen\\Auf der Morgenstelle 10\\72076 T\"ubingen, Germany}%
 \email{cederbaum@math.uni-tuebingen.de}
 \author{Christopher Nerz}
 \address{Mathematisches Institut\\Universit\"at T\"ubingen\\Auf der Morgenstelle 10\\72076 T\"ubingen, Germany}%
 \email{Christopher.Nerz@phoenixes.de}
 \date\today
 \thanks{This material is based upon work supported by the National Science Foundation under Grant No.\,0932078 000, while the authors were in residence at the Mathematical Science Research Institute in Berkeley, California, during the fall semester 2013.}
\ifpdf\hypersetup{pdftitle= {Explicit Riemannian manifolds with unexpectedly behaving center of mass},
									pdfauthor		= {Carla Cederbaum, Christopher Nerz},
									pdfsubject	= {Construction and discussion of explicit examples of Riemannian manifolds without well-defined center of mass},
									pdfkeywords	= {explicit example, constant mean curvature foliation, ADM center of mass, CMC center of mass, center of mass}}\fi
\begin{document}
\begin{abstract}
The (relativistic) center of mass of an asymptotically flat Riemannian manifold is often defined by certain surface integral expressions evaluated along a foliation of the manifold near infinity, e.\,g.\ by Arnowitt, Deser, and Misner (ADM). There are also what we call \emph{abstract} definitions of the center of mass in terms of a foliation near infinity itself, going back to the constant mean curvature (CMC-) foliation studied by Huisken and Yau; these give rise to surface integral expressions when equipped with suitable systems of coordinates. We discuss subtle asymptotic convergence issues regarding the ADM- and the coordinate expressions related to the CMC-center of mass. In particular, we give explicit examples demonstrating that both can diverge -- in a setting where Einstein's equation is satisfied. We also give explicit examples of the same asymptotic order of decay with prescribed mass and center of mass. We illustrate both phenomena by providing analogous examples in Newtonian gravity. Our examples conflict with some results in the literature. 
\end{abstract}
\maketitle%

 \section{Introduction and general considerations}\label{intro}
Isolated gravitating systems are individual or clusters of stars, black holes, or galaxies that do not interact with any matter or gravitational radiation outside the system under consideration. Intuitively, they should have a (total) mass and a (total) center of mass. We will demonstrate how delicately different notions of (total) center of mass described in the literature depend on the decay of the matter near 'infinity', i.\,e.\ far away from the main constituents of the system.

Let us begin by stating (somewhat vague) definitions of isolated gravitating systems to be made precise in Sections \ref{NG} and \ref{general}. Isolated gravitating systems have been studied intensively both in Newtonian gravity (NG) and general relativity (GR). In Newtonian gravity, an isolated gravitating system is described by a matter density function $\rho:\R^{3}\to\left[0,\infty\right)$ that decays suitably fast at infinity. It is useful to introduce the Newtonian potential $U$ solving the Poisson equation
\begin{align*} \labeleq{NPoisson}
\laplace U ={}& 4\pi\rho\quad\text{ in }\R^3, \\ \labeleq{NPoissonbdry}
 U \to{}& 0 \qquad\text{ as }r\to\infty,
 \end{align*}
where we have set the gravitational constant $G$ to $1$. $U$ inherits its decay from $\rho$. 

In general relativity, isolated gravitating systems are modeled as asymptotically flat $3$-dimensional Riemannian manifolds $({\M^3},\g)$ equipped with a symmetric $(0,2)$-tensor field $K$ suitably decaying at infinity in the asymptotic end of the manifold. In addition, an isolated gravitating system carries a suitably decaying mass density function $\rho:\M^3\to\R$ as well as a suitably decaying momentum density $1$-form $J$. The matter variables $\rho$ and $J$ are usually assumed to satisfy the \emph{dominant energy condition}
\begin{align*}\labeleq{DEC}
\rho\geq\vert J\vert
\end{align*}
on $\M^3$, where $\vert\cdot\vert$ denotes the tensor norm induced by $\g$. $({\M^3},\g)$ can be pictured as a Riemannian submanifold or \emph{time-slice} of a Lorentzian spacetime $(\mathfrak{M}^{4},\mathfrak{g})$. The tensor field $K$ then arises as the induced second fundamental form. In this view, $\rho$ and $J$ arise as the time-time and the time-space components of the energy-momentum tensor $\mathfrak T$ of general relativity, respectively. The relativistic spacetime $(\mathfrak{M}^{4},\mathfrak{g})$ must satisfy Einstein's equation
\begin{align*}\labeleq{EE}
\mathfrak{Ric}-\frac{1}{2}\,\mathfrak{R}\,\mathfrak{g}={8\pi}\mathfrak T,
\end{align*}
where we have set both the gravitational constant $G$ and speed of light $c$ to $1$. $\mathfrak{Ric}$ and $\mathfrak{R}$ denote the Ricci and scalar curvature of the Lorentzian metric $\mathfrak{g}$, respectively. Applying the Gau{\ss}-Mainardi-Codazzi equations to $({\M^3},\g)\hookrightarrow(\mathfrak{M}^{4},\mathfrak{g})$, Einstein's equation \eqref{EE} implies the \emph{Einstein constraint equations}
\begin{align*}
\scalar-\vert K\vert^2+(\tr K)^{2}={}&16\pi\rho \labeleq{Econstraint}\\
\div(K-\tr K\,\g)={}&{}8\pi J. \labeleq{Mconstraint}
\end{align*}
Here, $\tr$ and $\div$ denote the trace and the divergence with respect to $\g$, respectively. Collecting all the above pieces of information, we say that an isolated gravitating system in GR is a so-called \emph{initial data set} $({\M^3},\g,K,\rho,J)$ of suitable decay and satisfying \eqref{Econstraint} and \eqref{Mconstraint}. However, many properties of initial data sets $({\M^3},\g,K,\rho,J)$ such as e.\,g.\ mass and center of mass can be defined using only the Riemannian variables $({\M^3},\g)$ and we will frequently switch back and forth between the initial data and the Riemannian perspectives. Also, we will explicitly name and denote the asymptotic coordinate chart whenever it seems helpful.

This paper is concerned with delicate decay considerations related to the (total) center of mass (CoM) of isolated gravitating systems in GR and NG, the latter being regarded as an indicator of phenomena and an incubator for ideas. At the same time, we will illustrate some more or less well-known analogies between the notions of center of mass in NG and GR, see Figure \ref{table}. When considering the center of mass, we will always implicitly or explicitly assume that the related mass is strictly positive. This is both a technical (as many definitions of center of mass divide by the mass) and a physically reasonable assumption.

\subsection{Mass in NG and GR}
In NG, the mass and the center of mass of an isolated gravitating system are conventionally determined by volume integrals of the matter density $\rho$ and the weighted position vector $\rho\,\vec{x}$ over the entire space $\R^{3}$, respectively, see \eqref{Nmass} and \eqref{NCoM}. The question whether the center of mass is well-defined thus depends on the integrability properties of $\rho\,\vec{x}$ in the sense of indefinite Riemann integrals (in spherical polars). On the other hand, as $\rho$ is essentially a divergence by \eqref{NPoisson}, the volume integrals for mass and center of mass can be rewritten in terms of surface integrals \lq at infinity\rq, see Section \ref{NG}.

When an isolated gravitating system is modeled as an initial data set in GR, its (total) relativistic mass, the so-called \emph{ADM-mass} introduced by Arnowitt, Deser, and Misner \cite{arnowitt1961coordinate}, is given by a surface integral expression \lq at infinity\rq, see \eqref{mADM}. The ADM-mass is known to asymptotically correspond to the volume integral over the scalar curvature $\sc$ in a sense made precise e.\,g.\ by Bartnik \cite{Bartnik}. Namely, for any suitably asymptotically flat Riemannian manifold $(\M,\g)$, the ADM-mass $m_{\text{ADM}}$ converges if and only if the volume integral
\begin{align*} \int \sc \d\vol \end{align*}
is finite (when taken over a suitable neighborhood of infinity). In particular, the ADM-mass is independent of the particular asymptotic coordinate chart used to compute the ADM- or the associated volume integral. The volume integral $\int\sc\d V$ is conceptually related to the Newtonian mass volume integral $\int\rho\d V$ via \eqref{Econstraint}.

\subsection{Center of mass in GR}
We now proceed to discussing the (total) center of mass of isolated gravitating systems in GR. First, let us remark that the contemporary literature knows several definitions of such a center. Several authors define the center of mass of an initial data set $(M^{3},g,K,\rho,J)$ as a foliation near infinity of the corresponding Riemannian manifold $(M^{3},\g)$ with total mass $m>0$. We will call such definitions \term{abstract} to contrast what we will call \term*[center of mass!coordinate]{coordinate} definitions of center of mass, see below.

The first such abstract definition was given by Huisken and Yau \cite{huisken_yau_foliation}, who defined the \emph{CMC-center of mass} to be the unique foliation near infinity by closed, stable surfaces with constant mean curvature, see Section \ref{general}. This was motivated by an idea of Christodoulou and Yau \cite{christodoulou71some}. Lamm, Metzger, and Schulze \cite{lamm2011foliationsbywillmore} sub\-sequent\-ly used a unique foliation by spheres of Willmore type. In the case of static isolated gravitational systems with compactly supported matter, the first author used level sets of the static lapse function to define an abstract center of mass  \cite[Chap.~3,\;5]{Cederbaum_newtonian_limit}. We will interpret the foliation near infinity by level sets of the Newtonian potential as an abstract center of mass in NG in the same spirit, see Section \ref{NG} and Figure \ref{table}.

Chen, Wang, and Yau recently suggested a new definition of (quasi-local and total) center of mass of isolated gravitating systems which is constructed from optimal isometric embeddings into Minkowski spacetime \cite[Def.~3.2]{chen_wang_yau__Quasilocal_angular_momentum_and_center_of_mass_in_gr}.

Other definitions of relativistic CoM assign a specific center of mass vector $\vec{z}\in\R^{3}$ to an isolated gravitating system. The vector $\vec{z}$ can be pictured to describe a point in the target $\R^3$ of the asymptotically flat coordinate chart. The specific vector $\vec{z}$ in the various constructions depends on the choice of coordinates near infinity (i.\,e.\ in the asymptotically flat end of the manifold) -- at least a priori. In particular, the CoM vectors $\vec{z}$ transform appropriately under Euclidean motions applied to the chosen asymptotic coordinates. To distinguish such definitions from the abstract definitions described above, we will call them \term*[center of mass!coordinate]{coordinate} centers of mass.

The most important coordinate centers of mass in view of this paper are the \emph{(coordinate) ADM-} and the \emph{coordinate CMC-center of mass}. The ADM-center goes back to Arnowitt, Deser, and Misner \cite{arnowitt1961coordinate} and Beig and {\`O} Murchadha \cite{BM_examples} and is constructed in analogy to the ADM-mass, see Definition \ref{ADMCoM}. The coordinate CMC-center of mass is constructed from the abstract CMC-center and was suggested by Huisken and Yau \cite{huisken_yau_foliation} as a Euclidean center of the CMC-foliation, see Definition \ref{folicoordCoM}. Other coordinate CoM notions have been suggested by Corvino and Schoen \cite{corvino2006asymptotics} and by Huang \cite{Lan_Hsuan_Huang__Foliations_by_Stable_Spheres_with_Constant_Mean_Curvature}.

It is well-known that the coordinate CMC-center of mass of a given, suitably asymptotically flat Riemannian manifold $(M^{3},\g,\vec{x}\,)$ coincides with its ADM-center of mass. This has been established by Huang \cite{Lan_Hsuan_Huang__Foliations_by_Stable_Spheres_with_Constant_Mean_Curvature}, Eichmair and Metzger \cite{metzger_eichmair_2012_unique}, and by the second author \cite[Cor.~3.8]{nerz2013timeevolutionofCMC} under different assumptions on the asymptotic decay, see Section \ref{general}.

In the context of static isolated gravitating systems with compactly supported matter, the first author defined a \lq pseudo-Newtonian\rq\ (quasi-local and total) coordinate center of mass and proved that it coincides with the coordinate CMC- and ADM-centers of mass (in appropriate geometric asymptotic coordinates). The coordinate pseudo-Newtonian and thus the coordinate CMC- and ADM-centers of mass converge to the Newtonian one in the Newtonian limit $c\to\infty$ \cite[Chap.~4,\;6]{Cederbaum_newtonian_limit}. This further justifies the ADM- and CMC-definitions of center of mass.

The ADM-center of mass expression is known to asymptotically correspond to the volume integral of the scalar curvature $\sc$ integrated against $\vec{x}$, i.\,e.\ for any $\Ck^2$-asymptotically Schwarzschildean Riemannian manifold $(\M,\g,\vec{x}\,)$, the ADM-center of mass $\centerz_{\text{ADM}}$ converges if and only if the volume integral
\begin{align*} \int \sc\,\vec x \d\vol \end{align*}
is finite (when taken over a suitable neighborhood of infinty), see Corvino and Wu \cite{Corvino-Wu}, Definition \ref{Ck_asymptotically_Schwarzschildean}, and page \pageref{laplaHess}. As for the ADM-mass, the volume integral $\int\sc\,\vec{x}\d V$ is conceptually related to the Newtonian integral $\int\rho\,\vec{x}\d V$ via \eqref{Econstraint}.

\subsection{Critical order of decay}
The Newtonian and relativistic center of mass volume integrals $\int\rho\,\vec{x}\d V$ (NG) and $\int \sc\,\vec{x}\d V$ (GR) clearly converge whenever the matter density $\rho$ and the scalar curvature $\sc$ decay faster than $r^{-4}$ as $r\to\infty$ (by Lebesgue's theorem on dominated convergence). If they decay more slowly, in particular exactly at the \emph{critical rate $r^{-4}$}, the center of mass volume integrals will only converge under certain additional conditions. For example, the Newtonian integral converges (in the sense of indefinite Riemann integrals in spherical polars) if $\rho$ is even or asymptotically even as defined on page \pageref{antisymm}. Accordingly, the relativistic integral converges (in the same sense) if the Riemannian metric $\g$ and in particular its scalar curvature $\sc$ satisfy the asymptotic evenness conditions specified by Regge and Teitelboim \cite{RT}. This was proved by Huang \cite{Lan_Hsuan_Huang__Foliations_by_Stable_Spheres_with_Constant_Mean_Curvature} (in dimension $n=3$) and Eichmair and Metzger \cite{metzger_eichmair_2012_unique} (for $n\geq3$).

At the level of the Newtonian potential $U$, a critically decaying matter density $\rho=\mathcal{O}(r^{-4})$ of mass $m>0$ gives rise to an asymptotic expansion of the form
\begin{align*}\labeleq{Uexp}
U=-\frac{m}{r}+\mathcal{O}_{2}(r^{-2})
\end{align*}
by classical potential theory. The term $-\frac{m}{r}$ is harmonic and arises as the potential of a spherically symmetric (or point) source of mass $m$ centered at the origin $\vec{0}$. In the relativistic setting, a corresponding natural asymptotic decay assumption on a Riemannian metric $\g$ is to be asymptotic to the spherically symmetric Riemannian Schwarzschild metric ${\g[m]}$ of mass $m>0$ in a given system of asymptotic coordinates, see Definition \ref{Ck_asymptotically_Schwarzschildean}. The Riemannian Schwarzschild metric $\g[m]$ has vanishing scalar curvature ${\sc[m]}\equiv0$. A metric that deviates from the Schwarzschild metric at the order $r^{-2}$ can thus give rise to a scalar curvature function $\sc$ decaying exactly at the critical rate $r^{-4}$ (if the contribution of the deviation to the scalar curvature does not cancel at highest order). The examples we will present in Sections \ref{NG} through \ref{prescribed} all decay exactly at the critical rate, see also Figure \ref{table}.
\pagebreak[3]

\subsection{Systems with divergent CoM} 
In this paper, we present explicit examples of asymptotically Schwarzschildean Riemannian manifolds (or initial data sets) of critical decay for which the coordinate CMC- and the ADM-center of mass do not converge. The decay assumptions we make fall within the categories of metrics studied by Huisken and Yau \cite{huisken_yau_foliation} and by Metzger \cite{metzger2007foliations} who prove existence of the abstract CMC-center of mass under stronger and weaker asymptotic conditions, respectively. The examples do not satisfy the Regge-Teitelboim conditions. While the Euclidean centers of the leaves of the CMC-foliation near infinity stay bounded as asserted in \cite[Proof of Thm~5.1]{huisken_yau_foliation}, the divergent examples conflict with the convergence to a coordinate CMC-center of mass stated in \cite[Thm~4.2]{huisken_yau_foliation}.

The first class of examples is derived from an understanding of the time evolution of the leaves of the CMC-foliation established by the second author in \cite[Thm~3.1]{nerz2013timeevolutionofCMC}. Here, we modify the Riemannian Schwarzschild metric by a term of the form $-2 f(r)\,Y=\mathcal{O}(r^{-2})$, where $f$ is a suitably chosen function and $Y$ is the York tensor introduced by York \cite{York__1979__Kinematics_and_dynamics_of_general_relativity} when considering the ADM-linear momentum. Relying on the techniques developed in \cite{nerz2013timeevolutionofCMC}, we find that the Euclidean coordinate centers of the CMC-leaves oscillate as the surfaces approach infinity for appropriate choices of $f$, see Section \ref{York}. In this setting, we use an \lq artificial\rq\ spacetime which probably has no physical significance; in particular, we do not specify the remaining data $(K,\rho,J)$ needed to represent an initial data set.

The second class of examples arises as time-slices in the Schwarzschild spacetime \eqref{schwarzschild_def}, in particular as bounded graphs over the canonical time-slice $\lbrace{t=0\rbrace}$. As the Schwarzschild spacetime satisfies Einstein's vacuum equation -- i.\,e.\ Equation \eqref{EE} with $\mathfrak{T}\equiv0$ and hence $\rho\equiv0$, $J\equiv0$ --, and thus also the dominant energy condition \eqref{DEC}, these examples do seem physically relevant. However, the time-slices we study may seem somewhat artificial and definitely do not correspond to a conventional family of observers near infinity, see Section \ref{schwarz}.

In Section \ref{NG}, we provide similar examples in the framework of Newtonian gravity. These demonstrate that the observed divergence effect is not a deficiency of the definitions of the ADM- and the coordinate CMC-centers of mass in GR but a natural effect that one should expect to occur for any notion of center of mass modeled upon Newtonian ideas. In particular, the described effect should occur for any (coordinate) notion of center of mass the Newtonian limit of which coincides with the Newtonian center of mass, see above.

As far as we know, these examples are the first examples of asymptotically Schwarzschildean Riemannian metrics with divergent ADM- and in particular the first examples with divergent coordinate CMC-center of mass. Moreover, we do not know of any other examples where a critical order deviation from a given asymptotically flat metric gives rise to divergence of the center of mass. Explicit examples of asymptotically flat Riemannian metrics with divergent ADM-center of mass have been constructed by Beig and O'Murchadha \cite{BM_examples} and extended by Huang \cite{Huang_examples}. These examples satisfy Einstein's vacuum equation. Their CoM diverges due to a term of order higher than the critical one considered here. Chen, Huang, Wang, and Yau recently announced related examples of initial data sets with diverging ADM-center of mass arising as graphical time-slices in the Schwarzschild spacetime \cite{CHWY_ang}; however the time-slices considered there are not bounded and the induced Riemannian metrics are not asymptotically Schwarzschildean and not of critical order in our sense.

\subsection{Systems with prescribed CoM}\label{Introprescribed}
The center of mass of a spherically symmetric isolated gravitating system lies at the coordinate origin $\vec{0}$ both in NG and for the Riemannian Schwarzschild metric \eqref{schwarzschild_def} in GR. However, by merely translating the asymptotic coordinates $\vec{x}$ by a fixed vector $\vec{z}$, i.\,e.\ by using new asymptotic coordinates $\vec{y}:=\vec{x}+\vec{z}$, one can shift the coordinate center of mass to any prescribed position $\vec{z}$. In NG, such a coordinate translation affects the Newtonian potential $U(\vec{x})=-\frac{m}{r}+b(\vec{x})$ with $b=\mathcal{O}_{2}(r^{-2})$ at the order $r^{-2}\approx s^{-2}$ with $s:=\vert\vec{y}\vert$ via
$$U(\vec{y})=-\frac{m}{s}-\frac{m\vec{z}\cdot\vec{y}}{s^{3}}+b(\vec{y}-\vec{z})+\mathcal{O}_{2}(s^{-3}),$$
while the matter density $\rho$ is not affected at the critical order $r^{-4}\approx s^{-4}$ (because the new term is harmonic). Similarly, in the relativistic setting, a coordinate translation changes an asymptotically Schwarzschildean Riemannian metric $\g$ with asymptotic coordinates $\vec{x}$, $$\g_{ij}=:\g[m]_{ij}+\devi[]_{ij},\quad\devi[]_{ij}=\mathcal{O}_{2}(r^{-2}),$$ to another (but isometric) asymptotically Riemannian metric $$\g_{ij}=\g[m]_{ij}+\frac{2m\vec{z}\cdot\vec{y}}{s^{3}}\,\delta_{ij}+\devi[]_{ij}+\mathcal{O}_{2}(s^{-3})$$ with respect to the translated coordinates $\vec{y}$, $s:=\vert\vec{y}\vert$, and the Euclidean metric $\delta$. The change occurs at the critical order $r^{-2}\approx s^{-2}$. The scalar curvature is not affected at the critical order $r^{-4}\approx s^{-4}$ (as the contribution of the new term cancels).

For given $m>0$ and $\vec{z}\in\R^{3}$, one can reinterpret the above consideration by saying that the asymptotically Schwarzschildean Riemannian manifold $(M^{3},\g,\vec{x}\,)$ defined by $$\g_{ij}:=\g[m]_{ij}+\frac{2m\vec{z}\cdot\vec{x}}{r^{3}}\,\delta_{ij}$$ is a metric of prescribed mass $m$ and ADM- and coordinate CMC-center of mass $\vec{z}$. This metric deviates from the Riemannian Schwarzschild metric at the critical order $r^{-2}$ but the change has no effect at the corresponding critical order $r^{-4}$ of the scalar curvature, see \cite[Sec.3.3]{Cederbaum_newtonian_limit}. We will construct asymptotically Schwarzschildean Riemannian metrics with prescribed mass $m>0$ and ADM- and coordinate CMC-center of mass $\vec{z}$ such that the deviation from the Schwarzschild metric and the scalar curvature of the metrics decay at the critical rates $r^{-2}$ and $r^{-4}$, respectively, see Section \ref{prescribed}. Again, we construct a first example by modifying the Riemannian Schwarzschild metric by a certain multiple of a suitably chosen York tensor. A second example arises as a suitably chosen bounded time-slice of the Schwarzschild spacetime. We present a similar construction in the Newtonian setting, underlining that this is a natural effect to be expected of isolated gravitating systems.

Corvino \cite{Corvino}, Corvino and Schoen \cite{corvino2006asymptotics}, and Huang, Schoen and Wang \cite[Cor.~5.3]{huang2011specifying} have also constructed suitably asymptotically flat metrics with prescribed mass and center of mass. Their metrics arise as essentially conformally flat solutions to the vacuum Einstein constraint equations. Recently, Chen, Huang, Wang, and Yau announced examples of initial data sets with prescribed ADM-center of mass which arise as unbounded not asymptotically Schwarz\-schil\-dean graphical time-slices in the Schwarzschild spacetime \cite{CHWY_ang}. None of these examples are of critical order in our sense.

In the description of the explicit examples as well as in the $\mathcal{O}$-notation introduced in Section \ref{notation}, we disrespect physical units for notational simplicity. To remedy this problem, one can replace the radial coordinate $r$ (or the foliation parameter $\sigma$) by $\nicefrac{r}{r_{0}}$ (or $\nicefrac{\sigma}{\sigma_{0}}$) for a fixed radius $r_{0}>0$ (or parameter $\sigma_{0}>0$, respectively).\\

\textbf{Structure of the paper.} In Section \ref{notation}, we introduce the relevant notations and define the asymptotic decay we will work with. In Section \ref{NG}, we quickly review the relevant facts and definitions from Newtonian gravity. We then discuss the delicate influence of the decay of the matter density $\rho$ of an isolated gravitating system on the convergence of the center of mass (volume integral). The discussed results are well-known; however, we provide explicit examples which will enable us to highlight the similarities to the relativistic situation. The first class of relativistic examples is presented in Section \ref{York}, the second class in Section \ref{schwarz}. Finally, we construct explicit examples of isolated gravitating systems of critical decay with prescribed mass $m>0$ and center of mass $\vec{z}$ in Sections \ref{subNG} (Newtonian example), \ref{subYork} (first relativistic example), and \ref{subschwarz} (second relativistic example). We summarize the examples and the different notions of mass and center of mass discussed in this paper in a table on page \pageref{table}. \\ 

\smallskip\textbf{Acknowledgment.}
The authors thank Gerhard Huisken for helpful discussions and Mu-Tao Wang for comments on the first draft of this paper.\smallskip
\pagebreak[3]

\section{Notation and decay conditions}\label{notation}
The contemporary literature knows several different types of decay conditions for isolated gravitating systems. We will use \emph{pointwise} assumptions. Others assume Sobolev regularity, i.\,e.\ they assume that $\g-\schwg$ and $\g-\eukg$ lie in specific weighted Sobolev spaces\footnote{see for example Huang, Schoen, and Wang \cite[Thm~1,\;Thm~2]{huang2011specifying}} or satisfy certain symmetry conditions such as the Regge-Teitelboim conditions\footnote{see for example Huang \cite{Lan_Hsuan_Huang__Foliations_by_Stable_Spheres_with_Constant_Mean_Curvature} or Eichmair and Metzger \cite{metzger_eichmair_2012_unique}}. We note that the decay assumptions satisfied by our examples are stronger. In particular, the corresponding Sobolev assumptions hold. We expect that our examples can be generalized to higher dimensions $n\ge3$.

We use the following $\Oof*$-notation corresponding to pointwise inequalities.

\begin{notation}
Let $(\M,\g)$ be a Riemannian manifold, $R_{0}>0$ a fixed radius, and $\vec x:\M\setminus L\to\R^3\setminus B_{R_{0}}(\vec 0)$ a chart on $\M$ outside a compact set $L\subseteq\M$. For $s\in\R$, $f,h\in\Ck^0(\M\setminus L)$, we write $f=h+\Oof[r]s$, if there is constant $C$ such that
\begin{align*} \vert f-h\vert \le{}& C\cdot r^s \end{align*}
in $\M\setminus L$, where $r(\vec{x}\,):=\vert \vec x\vert$ denotes the Euclidean distance to the coordinate origin $\vec{0}$. Similarly, we write $f=h+\Oof_k[r]s$ if $s\in\R$ and $f,h\in\Ck^k(\M\setminus L)$ satisfy
\begin{align*} \frac{\partial*^\alpha(f-h)}{\partial*x^\alpha} \in \Oof[r]{s-\vert\alpha\vert} \qquad\forall\,\vert\alpha\vert\le k. \end{align*}
We will use the same notation for tensor fields, replacing partial by covariant derivatives and absolute values by tensor norms induced by $\g$. Moreover, we will use this notation in the context of foliations near infinity, replacing the radial coordinate $r$ by the foliation parameter $\sigma$.
\end{notation}

Let us now quickly recall the Schwarzschild spacetime. The Schwarzschild spacetime is the oldest known non-trivial solution of Einstein's (vacuum) equation \eqref{EE} with $\mathfrak{T}\equiv0$. It describes the spacetime exterior region of a spherically symmetric star or a spherically symmetric black hole. In the context of isolated gravitating systems, one mostly considers the Riemannian Schwarzschild time-slice $\lbrace{t=0\rbrace}$.
\begin{definition}[Schwarzschild spacetime]\label{schwarzschild_spaccetime}
Let $m>0$. The \emph{Schwarzschild spacetime $(\DoATensor{\mathfrak M}[m],\DoATensor{\mathfrak g}[m])$ of mass $m$} is defined by $\DoATensor{\mathfrak M}[m]:=\R\times(\R^3\setminus B_{\frac m2}(\vec{0}))$ and
\begin{align*}
 \DoATensor[\hspace{-.05em}]{\mathfrak g}[m] := {-}(\frac{1-\frac m{2r}}{1+\frac m{2r}})^2\d t^2 + \schwg, \quad\text{with}\quad \schwg := (1+\frac m{2r})^4 \eukg, \labeleq{schwarzschild_def}
\end{align*}
where $t$ is the time-coordinate and $\eukg$ denotes the Euclidean metric on $\R^3$. The metric $\schwg$ is called the \emph{(Riemannian) Schwarzschild metric of mass $m$}. The function
\begin{align*}\labeleq{Nschwarz}
\schwN[m] :=&\frac{1-\frac{m}{2r}}{1+\frac{m}{2r}}
\end{align*}
is called the \emph{(Schwarzschild) lapse function}.
\end{definition}
It is well-known that $(\DoATensor{\mathfrak M}[m],\DoATensor[\hspace{-.05em}]{\mathfrak g}[m])$ solves Einstein's vacuum equation, that the ADM-mass of ${\schwg}$ is equal to the parameter $m$, and that the scalar curvature ${\sc[m]}$ of ${\schwg}$ is $0$ (and hence \emph{even} in any system of asymptotic coordinates, see page \pageref{antisymm}).\smallskip\pagebreak[3]

As there are different definitions of \lq asymptotically Schwarzschildean Riemannian manifolds\rq, we now precisely describe the asymptotic assumptions we make.

\begin{definition}[\texorpdfstring{$\Ck^k$}{C-k}-asymptotically Schwarzschildean Riemannian manifolds]\label{Ck_asymptotically_Schwarzschildean}
Let $\ve>0$. A triple $(\M,\g,\vec{x}\,)$ is called \term{$\Ck^k$-asymptotically Schwarz\-schildean of order $1+\ve$ with deviation $\erA$}, if $(\M,\g)$ is a smooth manifold, $\erA$ is a symmetric tensor field on $\M$, and $\vec x:\M\setminus L\to\R^3$ is a chart of $\M$ outside a compact set $L$ such that
\begin{align*}
 \g * ={}& \schwg + \erA + \Oof_k{-2-\ve} \text{ with } \erA ={} \Oof_k{-1-\ve}
\end{align*}
where $k\ge2$ and $r:\M\setminus L\to\interval0\infty:p\mapsto\vert \vec x(p)\vert$ is the Euclidean distance to the coordinate origin $\vec{x}=\vec{0}$.
\end{definition}\pagebreak[1]
\section{The center of mass in Newtonian gravity}\label{NG}
Isolated gravitating systems are systems with matter distributions that decay suitably fast at infinity. In NG, the relevant decaying quantity is the matter density $\rho:\R^3\to\interval*0\infty$ of the system (at one instant of time). Alternatively, one can ask for suitable decay of the Newtonian potential $U$ solving the Poisson equation \eqref{NPoisson}. The total mass $m$ of the system is then given by the volume integral
\begin{align*}
 m :={}& \int_{\R^3}\rho\d V, \labeleq{Nmass}
\end{align*}
while its total center of mass $\centerz$ is defined as the (Euclidean) volume integral
\begin{align*}
 \centerz :={}& \frac1m\int_{\R^3}\rho\,\vec{x}\d V, \labeleq{NCoM}
\end{align*}
whenever the system is non-empty, i.\,e.\,$m>0$. Clearly, the mass and center of mass integrals converge only when $\rho$ decays suitably fast, e.\,g.\ when $\rho$ decays faster than $r^{-4}$. Alternatively, they converge when $\rho$ decays faster than $r^{-3}$ and is \emph{asymptotically even} in the sense that its odd part
\begin{align*}
\rho_{\text{odd}}(\vec{x}) :={}& \frac{\rho(\vec{x})-\rho({-}\vec{x})}2 \labeleq{antisymm}
\end{align*}
decays faster than $r^{-4}$ while its even part
\begin{align*}
\rho_{\text{even}}(\vec{x}) :={}& \frac{\rho(\vec{x})+\rho({-}\vec{x})}2 \labeleq{symm}
\end{align*}
need not. In terms of the $\Oof*$-notation introduced in Section \ref{notation}, this can be rephrased as follows.

\begin{proposition}\label{Nprop}
Let $\varepsilon>0$. Let $\rho:\R^3\to\interval*0\infty$ be a Newtonian matter density such that $\rho=\Oof[r]{-3-\varepsilon}$ as $r\to\infty$ and assume that its mass satisfies $m>0$. The corresponding center of mass is well-defined (in the sense of indefinite Riemann integrals in spherical polars) provided that
\begin{align*}
 \rho_{\text{odd}}=\Oof[r]{-4-\varepsilon}.
\end{align*}
\end{proposition}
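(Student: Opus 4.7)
The plan is to decompose $\rho = \rho_{\text{even}} + \rho_{\text{odd}}$ via \eqref{antisymm} and \eqref{symm}, and to treat the two contributions
\[
 \int_{B_R(\vec{0}\,)} \rho_{\text{even}}(\vec{x}\,)\,\vec{x}\d V
 \quad\text{and}\quad
 \int_{B_R(\vec{0}\,)} \rho_{\text{odd}}(\vec{x}\,)\,\vec{x}\d V
\]
separately, then let $R\to\infty$. Both integrals are well-defined for each finite $R$ because $\rho$ is locally integrable (being a matter density), so we only need to control what happens at infinity, which is where the \emph{indefinite Riemann integral in spherical polars} interpretation becomes relevant.

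For the even part, the key observation is that $\vec{x}\mapsto\rho_{\text{even}}(\vec{x}\,)\,\vec{x}$ is an odd vector-valued function on $\R^3$, so by the symmetry of the Euclidean ball $B_R(\vec{0}\,)$ under $\vec{x}\mapsto-\vec{x}$ we immediately get
\[
 \int_{B_R(\vec{0}\,)} \rho_{\text{even}}(\vec{x}\,)\,\vec{x}\d V = 0 \qquad\text{for every } R>0.
\]
Hence this contribution vanishes in the limit $R\to\infty$ without any further decay assumption on $\rho_{\text{even}}$ beyond local integrability; this is precisely what the \emph{indefinite Riemann integral in spherical polars} is designed to exploit.

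For the odd part, the plan is to show that the integral converges absolutely. From the hypothesis $\rho_{\text{odd}}=\Oof[r]{-4-\varepsilon}$ one obtains $\vert\rho_{\text{odd}}(\vec{x}\,)\,\vec{x}\vert = \Oof[r]{-3-\varepsilon}$ for $r\geq R_0$ with some $R_0>0$. Converting to spherical polars, the radial integrand is bounded by $C\,r^{-3-\varepsilon}\cdot r^2 = C\,r^{-1-\varepsilon}$, which is integrable on $\cinterval{R_0}\infty$. Combined with local integrability of $\rho$ (and hence of $\rho_{\text{odd}}$) on $B_{R_0}(\vec{0}\,)$, this gives
\[
 \int_{\R^3} \vert\rho_{\text{odd}}(\vec{x}\,)\,\vec{x}\vert\d V < \infty,
\]
so by dominated convergence the corresponding indefinite Riemann integral converges to a finite vector in $\R^3$. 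Dividing by $m>0$ then yields the claim.

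There is no real obstacle here; the only subtlety worth flagging is the precise interpretation of the conditional convergence via spherical polars, which is exactly what makes the cancellation of the even part legitimate even though $\rho_{\text{even}}(\vec{x}\,)\,\vec{x}$ itself need not be absolutely integrable on $\R^3$ under the hypothesis $\rho=\Oof[r]{-3-\varepsilon}$.
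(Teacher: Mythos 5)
Your proposal is correct and follows essentially the same route as the paper: split $\rho$ into even and odd parts, observe that the even part contributes zero to each ball integral by the antipodal symmetry of $B_R(\vec{0}\,)$, and handle the odd part by absolute convergence via Lebesgue's dominated convergence theorem. The only difference is that you spell out the spherical-polars bookkeeping and the local-integrability caveat a bit more explicitly than the paper does, which does not change the argument.
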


\begin{proof}
When $\rho=\Oof[r]{-4-\varepsilon}$, the claim follows from Lebesgue's theorem on dominated convergence. Otherwise, we compute
\begin{align*}
\int_{B_{R}(0)}\rho\,\vec{x}\d V ={}& \int_{B_{R}(0)}\rho_{\text{odd}}\,\vec{x}\d V + \int_{B_{R}(0)}\rho_{\text{even}}\,\vec{x}\d V \\
 ={}& \int_{B_{R}(0)}\rho_{\text{odd}}\,\vec{x}\d V + \phantomas[c]{\int_{B_{R}(0)}\rho_{\text{even}}\,\vec{x}\d V}{\vec{0}}\hspace{-3.1em},
\end{align*}
for all $R>0$, where $B_{R}(0)$ denotes the open ball of radius $R$ centered at the origin. Using the assumption of asymptotic evenness, we find that the right hand side of this converges as $R\to\infty$, again by Lebesgue's theorem. This implies that the center of mass is well-defined.
\end{proof}
In particular, non-empty isolated Newtonian systems with even matter density (i.\,e.\ $\rho_{\text{odd}}\equiv0$) or even purely radial matter density $\rho=\rho(r)$ have a well-defined center of mass under the weaker decay condition $\rho=\Oof[r]{-3-\varepsilon}$. This decay corresponds to the relativistic Regge-Teitelboim conditions, see page \pageref{notation}.
 
Our matter becomes more delicate when $\rho=\Oof[r]-4$: While the center of mass \emph{can} still converge in this case -- e.\,g.\ if $\rho$ is even or purely radial or has compact support --, it need not as the example
\begin{align*}\labeleq{Nex}
\rho_{\vec{u}}:\R^3\to\interval*0\infty:\vec{x}\mapsto \frac{1}{r^{4}}\,(\vert\vec{u}\vert+\frac{\vec{x}\cdot\vec{u}}r)
\end{align*}
demonstrates\footnote{In fact, this matter density is not well-defined at the coordinate origin. The described effect however persists when it is multiplied by a cut-off function cutting out a ball centered at the coordinate origin and equal to $1$ in a neighborhood of $\infty$.}. Here, $\vec{u}\in\R^3\setminus\{0\}$ is an arbitrary fixed vector and $\vec x\cdot\vec u$ denotes the Euclidean dot product. The sole purpose of the term $\vert\vec{u}\vert$ is to achieve non-negativity of $\rho$. This divergence effect is well-known in probability theory as the existence of probability distributions \lq without expected value\rq\ or \lq with undefined moments\rq, e.\,g.\ the (one-dimensional) Cauchy- or Lorentz-distribution.

To further the analogy between NG and GR, it is useful to rewrite the volume integrals defining mass and center of mass of a system with density $\rho$ in terms of surface integrals using the Newtonian potential $U$ via the Poisson equation \eqref{NPoisson}. Following \cite[Chap.~4]{Cederbaum_newtonian_limit},  we define the quasi-local Newtonian mass and center of mass of a closed orientable surface $\Sigma\hookrightarrow\R^3$ as

 \begin{align*}\labeleq{NQLmass}
 m_{N}(\Sigma) :={}& \frac{1}{4\pi}\int_{\Sigma}\frac{\partial* U}{\partial*\nu}\d A\quad\text{ and }\\ \labeleq{NQLCoM}
\centerz_{N}(\Sigma) :={}& \frac{1}{4\pi\,m_{N}(\Sigma)}\int_{\Sigma}\left(\frac{\partial* U}{\partial*\nu}\,\vec{x}-U\frac{\partial*\vec x}{\partial*\nu}\right)\d A, 
\end{align*}
respectively, where $\d A$ is the area measure induced by the Euclidean metric and $\nu$ is the (Euclidean) outer unit normal to $\Sigma$. In the definition of $\centerz_{N}(\Sigma)$, we implicitly assume that $m_{N}(\Sigma)\neq0$. These surface integral expressions are related to the volume integrals for $m$ and $\centerz$ via \eqref{NPoisson} by the divergence theorem and Green's formula, respectively. Again by the divergence theorem, \eqref{NPoisson}, and $\rho\geq0$, one sees that $m_{N}(\Sigma)\geq0$ for all $\Sigma$. Whenever $\rho$ decays as in Proposition \ref{Nprop}, $m_{N}(\sphere^{2}_{r})$ and $\centerz_{N}(\sphere^{2}_{r})$ converge to $m$ and $\centerz$ as $r\to\infty$, respectively. Otherwise, if $\rho$ decays at the critical rate $\mathcal{O}(r^{-4})$ or more slowly, the surface integral $\centerz_{N}(\sphere^{2}_{r})$ converges as $r\to\infty$ if and only if the corresponding volume integral $$\int_{B_{r}(\vec{0})}\rho\,\vec{x}\,\d V$$ converges as $r\to\infty$ -- in close analogy to the relativistic case, see Section \ref{intro}.

Finally, the standard asymptotic expansion \eqref{Uexp} of the Newtonian potential $U$ ensures that the level sets of $U$ foliate a neighborhood of infinity when $m\neq0$. Moreover, the leaves -- which are called \emph{equipotential surfaces} -- become asymptotically round spheres centered at $\vec{z}$, \cite[p.\ 48]{Cederbaum_newtonian_limit}. The Euclidean coordinate center of this foliation coincides with $\vec{z}$. We will a-historically call this foliation the \emph{abstract Newtonian center of mass} in analogy to the abstract CMC-center of mass discussed in the introduction and in Section \ref{general}. 

In Section \ref{subNG}, we construct a matter density $\rho=\mathcal{O}(r^{-4})$ with arbitrarily prescribed mass $m>0$ and center of mass $\vec{z}$, see also Figure \ref{table}. 
\pagebreak[3]
\section{The center of mass in general relativity}\label{general}
In general relativity, the ADM-mass $m$ of an isolated gravitating system modeled as an asymptotically decaying initial data set $(M^{3},\g,K,\rho,J)$ or an asymptotically Schwarzschildean Riemannian manifold $(M^{3},\g)$ is defined as follows.
\begin{definition}[ADM-mass]
The \term{ADM-mass $m$} of a $\Ck^k$-asymptotically Schwarz\-schildean Riemannian manifold $(\M,\g*,\vec{x}\,)$ of order $1+\ve$ for any $\ve>0$ is defined by
\begin{align*}
 m_{\text{ADM}} :={}& \frac1{16\pi}\lim_{r\to\infty}\sum_{i,j=1}^3\ \int_{\sphere^2_r(\vec 0)}(\partial_i@{\g_{jj}} - \partial_i@{\g_{ij}})\,\frac{x_i}r \d A, \labeleq{mADM} 
\end{align*}
where $\d A$ denotes the measure induced on the coordinate sphere $\sphere^2_r(\vec{0})$ by the Euclidean metric $\eukg$.
\end{definition}
The precise asymptotic decay necessary for defining the ADM-mass was discussed by Bartnik \cite{Bartnik} and Chru\'sciel \cite{Chrus2}; the decay we assume here is by far more restrictive. We will abbreviate the ADM-mass of a system $(\M,\g,\vec{x}\,)$ by $m$ and will from now on assume that it is strictly positive.\pagebreak[1]

We will now introduce the essential notion of the \emph{Euclidean coordinate center} of a surface. In Euclidean geometry, any closed surface $\Mz\hookrightarrow\R^3$ has a \lq Euclidean coordinate center\rq\ $\centerz_E(\Mz)$ defined by
\begin{align*} \centerz_i :=(\centerz_{E}(\Sigma))_{i}:={}& \fint_{\Mz} x_i \d A := \frac1{\vert\Mz\vert} \int_{\Mz} x_i \d A, \labeleq{eucl_center_of_surface} \end{align*}
where $\d A$ denotes the measure induced on $\Mz$ by the Euclidean metric $\eukg$. Picking a fixed system of asymptotically Schwarzschildean coordinates $\vec x:\M\setminus L\to\R^3$ of $(\M,\g)$, this definition can be extended to closed surfaces $\Mz\hookrightarrow\M\setminus L$. We will also call these \lq Euclidean centers\rq\ and denote them by $\centerz_E(\Mz)$. Following Huisken and Yau \cite[Thm 4.2]{huisken_yau_foliation}, we define:

\begin{definition}[Coordinate center of a foliation]\label{folicoordCoM}
Let $\{\Mz<\sigma>\}_{\sigma\ge\sigma_0}$ be a foliation near infinity of a $\Ck^k$-asymptotically Schwarz\-schildean Riemannian manifold  $(\M,\g*,\vec{x}\,)$. Let $\centerz<\sigma>_E$ denote the Euclidean coordinate center of the leaf $\Mz<\sigma>$. The \term{coordinate center $\centerz$ of the foliation $\{\Mz_\sigma\}_{\sigma\ge\sigma_0}$} is given by $$\centerz:=\lim_{\sigma\to\infty}\centerz<\sigma>_{E}$$ in case the limit exists. 
\end{definition}

\begin{remark}
Along a foliation with $r=\sigma+\Oof[\sigma]-\ve$, we could in principle use adapted non-Euclidean coordinate centers instead of Euclidean ones, i.\,e.
\begin{align*} \frac1{\Ag<\sigma>(\Mz<\sigma>)} \int_{\Mz<\sigma>} \vec x\,\d\,\Ag<\sigma> = \centerz<\sigma>_E + \Oof[\sigma]-\ve \qquad\text{if}\quad r=\sigma+\Oof[\sigma]-\ve,  \end{align*}
where $\Ag<\sigma>$ denotes the measure on $\Mz<\sigma>$ induced by the metric $\g$. As this does not make a difference, we will stick to the easier concept of Euclidean centers.
\end{remark}

The first geometric foliation used for the definition of a center of mass in general relativity is the foliation of $(\M,\g)$ near infinity by stable spheres with constant mean curvature introduced by Huisken and Yau. Existence and uniqueness of this so-called \term{CMC-foliation} was first proved by Huisken and Yau \cite{huisken_yau_foliation} in dimension three for $\mathcal C^4$-asymptotically Schwarzschildean Riemannian manifolds of order $1+\ve=2$. Subsequently, Metzger \cite{metzger2007foliations}, Huang \cite{Lan_Hsuan_Huang__Foliations_by_Stable_Spheres_with_Constant_Mean_Curvature} and Eichmair and Metzger \cite{metzger_eichmair_2012_unique} weakened the decay and regularity assumptions on the metric. In particular, such a CMC-foliation exists and is unique for any $\mathcal C^2$-asymptotically Schwarzschildean Riemannian manifold of order $1+\ve>1$.

Whenever the coordinate center of the CMC-foliation of $(\M,\g)$ exists in an asymptotic coordinate chart $\vec{x}$, we call it the \term{coordinate CMC-center of mass of $(\M,\g,\vec{x}\,)$} and denote it by $\vec{z}_{\text{CMC}}$.

The ADM-center of mass is defined as follows \cite{arnowitt1961coordinate}, \cite{BM_examples}.\pagebreak

\begin{definition}[ADM-center of mass]\label{ADMCoM}
The \term{ADM-center of mass} $\centerz_{\text{ADM}}\in\R^3$ of a $\Ck^k$-asymptotically Schwarz\-schildean Riemannian manifold $(\M,\g,\vec{x}\,)$ of order $1+\ve>0$ is defined by
\begin{align*}
 (\centerz_{\text{ADM}})_i :={}& \frac1{16\pi m} \lim_{r\to\infty}
\sum_{j=1}^3\,\int_{\sphere^2_r(0)}( (\partial_j@{\g_{jk}}-\partial_k@{\g_{jj}})\frac{x^{k}}{r}x_i- (\g_{ij}\,\frac{x^j}r - \g_{jj}\,\frac{x_i}r))\d A
\end{align*}
(whenever the limit exists), where all indices are raised and lowered with respect to the Euclidean metric $\eukg$.
\end{definition}

It is well-known that the coordinate CMC-center of mass  of a given asymptotically flat Riemannian metric $(\M,\g,\vec{x}\,)$ converges and coincides with the ADM-center of mass if the Regge-Teitelboim conditions hold, so that, in particular, the scalar curvature is asymptotically even, see Huang \cite{Lan_Hsuan_Huang__Foliations_by_Stable_Spheres_with_Constant_Mean_Curvature} and Eichmair and Metzger \cite{metzger_eichmair_2012_unique}. As proven by the second author \cite[Cor.~4.2]{nerz2013timeevolutionofCMC}, the same is true in the setting of a $\Ck^2$-asymptotically Schwarzschildean Riemannian manifold $(\M,\g,\vec{x}\,)$ -- without symmetry assumptions on the scalar curvature --, if the coordinate CMC- \emph{or} the ADM-center of mass converges. Whenever one and thus both of them are well-defined, they coincide.
\pagebreak[3]
\section{Divergent examples related to motion}\label{York}
In this section, we construct examples of asymptotically Schwarzschildean Riemannian manifolds $(\M,\g,\vec{x}\,)$ for which the coordinate CMC-center of mass is not well-defined, i.\,e.\ the Euclidean centers of the leaves $\Mz<\sigma>$, $\centerz<\sigma>_E$, do not converge along the CMC-foliation. Therefore, the ADM-center of mass is not well-defined either \cite[Cor.~4.2]{nerz2013timeevolutionofCMC}. These examples are constructed in the following way: We begin with the Riemannian Schwarzschild manifold $(\R^3\setminus\{0\},\schwg)$ of mass $m>0$, see Definition \ref{schwarzschild_spaccetime}, perturb the metric by an arbitrary symmetric tensor field $\erA$ and calculate the change of the constant mean curvature surfaces under this perturbation. Then, we choose explicit tensors $\erA$ such that the Euclidean coordinate centers of the CMC-surfaces do not converge in $\R^3$, i.\,e.\ such that the coordinate CMC-center of mass is not well-defined.

Let $\overline\erA=\Oof_2{-1-\ve}$ be a symmetric tensor field on $\M:=\R^3\setminus B_R(\vec 0)$ for some $\ve>\nicefrac12$ and define interpolating Riemannian metrics $\g[t]:=\schwg-2t\,\overline\erA$, such that the deviations $\erAt$ of the constructed metrics are given by $\erAt={-}2t\,\overline\erA$. By \cite[Thm~6.4]{metzger2007foliations}, there exists a foliation $\{\Mz<\sigma>[t]\}_\sigma$ of $\M[3]$ by spheres $\Mz<\sigma>[t]$ with constant mean curvature $\nicefrac{-2}\sigma+\nicefrac{4m}{\sigma^2}$ with respect to $\g[t]$. Let $\centerz<\sigma>[t]_E\in\R^3$ denote the Euclidean co\-or\-din\-ate center of $\Mz<\sigma>[t]$, see \eqref{eucl_center_of_surface}.


It was shown by the second author \cite[Thm~3.1]{nerz2013timeevolutionofCMC} that these Euclidean coordinate centers evolve in the following way in $t$-direction:
\begin{corollary}[{\cite[Corollary of Thm~3.1]{nerz2013timeevolutionofCMC}}]
For the outer unit normal $\normal$ of $\Mz<\sigma>[t]$ with respect to the Euclidean metric, the asymptotic identity
\begin{align*}
 \partial[t]@{\centerz<\sigma>[t]_i} = \frac1{8\pi m}\int_{\Mz<\sigma>[t]} (\PiT_{ij}\normal^j + \sigma\;\delta_{ij}\PiT_{kl}^{,k}\normal^j\normal^l) \d A + \Oof[\sigma]-\ve \labeleq{York_Ex_Integral_1}
\end{align*}
holds, where $\PiT:=\tr\overline\erA\;\delta - \overline\erA$ and $\tr$ denotes the trace with respect to the Euclidean metric $\delta$.
\end{corollary}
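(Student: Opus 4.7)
The plan is to invoke \cite[Thm~3.1]{nerz2013timeevolutionofCMC} and specialise it to our one-parameter family $\g[t] = \schwg - 2t\,\overline\erA$. That theorem, stated for a general one-parameter family of asymptotically Schwarz\-schildean metrics on $\M$ whose CMC-leaves vary smoothly in the parameter, expresses $\partial[t]\centerz<\sigma>[t]_{i}$ as a surface integral on the CMC leaf $\Mz<\sigma>[t]$ involving the first variation $\dot\g := \partial[t]\g[t]$ together with its Euclidean divergence. In our setting $\dot\g = -2\,\overline\erA$; passing to the momentum-conjugate tensor $\PiT = \tr\overline\erA\,\delta - \overline\erA$ then rewrites the general expression in exactly the form stated.

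To see \emph{why} the identity takes this shape, it is instructive to sketch the mechanism underlying \cite[Thm~3.1]{nerz2013timeevolutionofCMC}. The leaves $\Mz<\sigma>[t]$ are characterised by the constant mean-curvature condition $\Ht_{\g[t]}(\Mz<\sigma>[t]) = -2/\sigma + 4m/\sigma^{2}$. Differentiating this condition in $t$ and writing the induced motion of $\Mz<\sigma>[t]$ as a normal deformation of scalar speed $f$ (with respect to $\g[t]$) produces an elliptic equation on $\Mz<\sigma>[t]$ of the schematic form
\[ \laplace f + (\vert\k\vert^{2} + \mathrm{Ric}(\normal,\normal))\,f = L(\dot\g), \]
where $L$ is a first-order linear differential operator in $\dot\g$ whose coefficients are built from the geometry of the leaf and the ambient metric. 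On the Schwarzschild background one has $\vert\k\vert^{2} + \mathrm{Ric}(\normal,\normal) = 2/\sigma^{2} + \Oof[\sigma]-3$, so the operator on the left agrees to leading order with $\laplace_{\sphere^{2}_{\sigma}} + 2/\sigma^{2}$, whose kernel on the approximate sphere is spanned by the three coordinate functions $x_{i}$. These are precisely the modes that encode the motion of the Euclidean coordinate centre.

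Projecting $f$ onto this kernel yields $\partial[t]\centerz<\sigma>[t]_{i} = \vert\Mz<\sigma>[t]\vert^{-1}\int_{\Mz<\sigma>[t]} f\,x_{i}\,\d A + \Oof[\sigma]-\ve$, so the remaining task is to pair $L({-}2\,\overline\erA)$ with $x_{i}$ on $\Mz<\sigma>[t]$. An integration by parts against the linear function $x_{i}$ converts the first-derivative terms in $L$ into the Euclidean divergence $\PiT_{kl}^{,k}$ (weighted by the radial factor $\sigma$ produced by the boundary-free integration by parts on $\sphere^{2}_{\sigma}$), while the remaining un-differentiated contribution reorganises into the pointwise term $\PiT_{ij}\normal^{j}$. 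The prefactor $\nicefrac{1}{8\pi m}$ emerges from $\vert\Mz<\sigma>[t]\vert = 4\pi\sigma^{2} + \Oof[\sigma]{2-\ve}$ combined with $\int_{\sphere^{2}_{\sigma}} x_{i}x_{j}\,\d A = \tfrac{4\pi\sigma^{4}}{3}\,\delta_{ij}$ and the normalisation supplied by the mean curvature $-2/\sigma + 4m/\sigma^{2}$.

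The principal technical hurdle, already settled in \cite{nerz2013timeevolutionofCMC}, is the control of the error term at order $\Oof[\sigma]-\ve$: one must show that $\Mz<\sigma>[t]$ stays uniformly (in $t$ on compact intervals) close to a Euclidean sphere of radius $\sigma$, and supply an elliptic estimate ensuring that the non-kernel part of $f$ only contributes to the error. Once these ingredients are available, the corollary follows from the quoted theorem by the straightforward substitution $\dot\g = -2\,\overline\erA$ and collection of terms in the resulting divergence-form right-hand side.
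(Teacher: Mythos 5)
Your proposal takes essentially the same route as the paper: the paper offers no independent argument for this statement, but simply records it as a specialization of \cite[Thm~3.1]{nerz2013timeevolutionofCMC} to the family $\g[t]=\schwg-2t\,\overline\erA$, which is exactly your first and last paragraphs. Your intermediate sketch of the mechanism behind the cited theorem (linearized CMC condition, Jacobi operator, projection onto the translational kernel) is consistent with how that reference proceeds, though it is avowedly schematic and its constants (e.g.\ the kernel-projection formula for $\partial[t]@{\centerz<\sigma>[t]_i}$) are not tracked exactly; since the actual proof is delegated to the citation, this does not affect the verdict.
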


\begin{remark}\label{ADMmom}
We recall that the \emph{ADM-linear momentum} ${\linmomentum}\in\R^3$ of a suitably decaying initial data set $(\M,\g,\k,\rho,J)$ is defined by
\begin{align*}
 (\linmomentum)_i = \frac1{8\pi m} \lim_{\sigma\to\infty}\int_{\sphere^2_\sigma(\vec{0})} \PiT_{ij}\frac{x^j}r \d A,
\end{align*}
where $\PiT=\tr\k\,\g-\k$ is the \term{momentum tensor field} of the initial data set \cite{arnowitt1961coordinate}.

Thus, the first term in the integral in \eqref{York_Ex_Integral_1} has to be understood as an approximate linear momentum, while the second term is an error term due to the approximation. This is explained in more detail in \cite[Remark~3.2]{nerz2013timeevolutionofCMC}.
\end{remark}

Furthermore, by \cite[Thm~1.1]{metzger2007foliations} and De Lellis and M\"uller \cite{DeLellisMueller_OptimalRigidityEstimates}, we know that the CMC-surface $\Mz<\sigma>[t]$ is a graph of a function ${\DoATensor\phi<\sigma>[t]}\in\Hk^1(\sphere^2_\sigma(\centerz<\sigma>[t]))$ over $\sphere^2_\sigma(\centerz<\sigma>[t])$ with $\Vert\phi\Vert_{\Hk^1(\sphere^2_\sigma(\centerz<\sigma>[t]))}=\Oof-\ve$, where we dropped the index $E$ for notational convenience. In particular, we find
\begin{align*}
 \partial[t]@{\centerz<\sigma>[t]_i} = \frac1{8\pi m}\int_{\sphere^2_\sigma(\centerz<\sigma>[t])} (\PiT_{ij}\DoATensor\eta<\sigma>[t]^j + \sigma\;\delta_{ij}\PiT_{kl}^{,k}\DoATensor\eta<\sigma>[t]^j\DoATensor\eta<\sigma>[t]^l) \d A + \Oof[\sigma]-\ve, \labeleq{York_Ex_Integral_2}
\end{align*}
where $\DoATensor\eta<\sigma>[t](\vec{x}\,)=\nicefrac{(\vec x-\centerz<\sigma>[t])}{\vert \vec x-\centerz<\sigma>[t]\vert}$ denotes the outer unit normal to $\sphere^2_\sigma(\centerz<\sigma>[t])$ with respect to the Euclidean metric. Moreover, we know that $\centerz<\sigma>[0]=\vec{0}$, since $\g[t=0]=\schwg$ is the Riemannian Schwarzschild metric, so that in particular $\vert\centerz<\sigma>[t]_i\vert=\Oof[\sigma]{1-\ve}$ by assumption on $\overline\erA=\Oof_2-\ve$ and by \eqref{York_Ex_Integral_2}. Thus, we conclude
\begin{align*}
 \partial[t]@{\centerz<\sigma>[t]_i}
	={}& \frac1{8\pi m} \int_{\sphere^2_\sigma(\vec{0})}
				(\PiT_{ij}\frac{x^j}r + \sigma\;\delta_{ij}\PiT_{kl}^{,k}\frac{x^j}r\frac{x^l}r) \d A	+ \Oof[\sigma]-\ve
\end{align*}
if we assume $\overline\erA=\Oof_2{-1-\ve}$. By a simple integration, the identity
\begin{align*}
 \centerz<\sigma>[t]_i ={}& \frac{t\sigma^2}{2m} \int_{\sphere^2_\sigma(\vec 0)}(\PiT_{ij}\frac{x^j}r + \sigma\;\delta_{ij}\PiT_{kl}^{,k}\frac{x^j}r\frac{x^l}r) \d A	+ \Oof[\sigma]-\ve \labeleq{York_Ex_Integral_Res}
\end{align*}
holds asymptotically for any $\overline\erA=\Oof_2{-1-\ve}$. Thus, the coordinate CMC-center of mass is well-defined if and only if \eqref{York_Ex_Integral_Res} converges as $\sigma\to\infty$. Let us now choose suitable tensor fields $\overline\erA=\Oof_2{-1-\ve}$ to get the desired examples. Motivated by York's example of a momentum tensor field  $\kY=\Oof_\infty-2$ with prescribed ADM-linear momentum $\vec P\in\R^3$ \cite[Chap.~6]{York__1979__Kinematics_and_dynamics_of_general_relativity}, we define $\overline{\erAt[]}:=f(r)\,\kY=\Oof_k{-1-\ve}$ for an arbitrary function $f=\Oof_k{1-\ve}$ with $k\ge2$. Recall that the York tensor is given by
\begin{align*}\labeleq{Yorktensor}
 \kY_{ij}(\vec{x}\,) :={}& \frac3{2r^2}\,(P_i\, \frac{x_j}{r} +  \frac{x_i}{r}\, P_j - P^k\frac{x_k}{r}\,(\eukg_{ij}-\frac{x_i\,x_j}{r^2})).
\end{align*}
Again, we raise and lower indices with respect to the Euclidean metric $\delta$. We find $\tr\,\overline{\erAt} = \Oof_k{-2-\ve}$ as well as
\begin{align*}
 \centerz<\sigma>[t]_E
	={}& \frac t2\,(2f(\sigma) + f'(\sigma)\;\sigma)\;\frac{\vec{P}}m + \Oof[\sigma]-\ve.
\end{align*}
Therefore, we conclude that
\begin{align*}\labeleq{CoMt,f}
 \centerz[t] :={}& \lim_{\sigma\to\infty}\centerz<\sigma>[t] = \frac{t \vec{P}}{2m}\,\lim_{\sigma\to\infty} \big(2f(\sigma) + f'(\sigma)\;\sigma\big),
\end{align*}
whenever this limit exists.

In particular, by choosing $t=1$ as well as the specific functions $f(r):=\sin(\ln r)$ and $f(r):=r^{1-\ve}$ with $\ve\in\interval{\nicefrac12}1$, we get examples of $\Ck^\infty$-asymptotically Schwarz\-schildean manifolds $(\R^3\setminus B_R(\vec{0}),\g,\vec{x}\,)$ of order $2$ and $1+\ve$, respectively, for which the coordinate CMC-center of mass (and therefore the ADM-center of mass) do not exist. For $f(r)=\sin(\ln r)$, the Euclidean centers of the leaves $\Mz<\sigma>$ of the CMC-foliation oscillate as $\sigma\to\infty$. This conflicts with \cite[Thm~4.2]{huisken_yau_foliation}; the first inequality on p.~301 seems unjustified. However, they are bounded as asserted by \cite[Proof~Thm~5.1]{huisken_yau_foliation}. When $f(r)=r^{1-\ve}$, the Euclidean centers of the leaves $\Mz<\sigma>$ of the CMC-foliation are unbounded; but we note that the decay conditions of \cite{huisken_yau_foliation} are violated.

Let us now summarize the above discussion, see also Figure \ref{table}.
\begin{example}[Divergent CoM $1$]\label{Ex1}
Let $m,\ve>0$ and let $Y$ be the York-tensor defined in \eqref{Yorktensor} for some $\vec{0}\neq\vec{P}\in\R^{3}$. Let
\begin{align*}
f:\left(\frac{m}{2},\infty\right)\to\R:r\mapsto\sin(\ln r)\quad\text{or}\quad
f:\left(\frac{m}{2},\infty\right)\to\R:r\mapsto r^{1-\ve}.
\end{align*}
Set $\g[f]:=\g[m]-2f(r)\kY$ so that $\g[f]=\g[m]+\mathcal{O}_{\infty}(r^{{-2}})$ or $\g[f]=\g[m]+\mathcal{O}_{\infty}(r^{-1-\ve})$, respectively, on $M^{3}:=\R^{3}\setminus B_{{m}/{2}}(\vec{0})$. Neither the coordinate CMC-center of mass nor the ADM-center of mass of the Riemannian manifolds $(M^{3},\g[f])$ are well-defined. The first example decays at the critical order $1+\ve=2$ as discussed in Section \ref{intro}.
\end{example}

A similar construction allows us to identify $C^{\infty}$-asymptotically Schwarz\-schil\-dean Riemannian metrics of the critical order $1+\ve=2$ with arbitrarily prescribed mass $m>0$ and ADM- and coordinate CMC-center of mass $\vec{z}$, see Section \ref{subYork} and Fig.~\ref{table}.
\section{Divergent examples in the Schwarzschild spacetime}\label{schwarz}
The second class of examples of asymptotically Schwarzschildean Riemannian manifolds or initial data sets with divergent ADM- and coordinate CMC-center of mass arises as time-slices of the Schwarzschild spacetime $(\DoATensor{\mathfrak M}[m],\DoATensor{\mathfrak g}[m])$, see \ref{schwarzschild_spaccetime}. As the effects we exploit have nothing to do with spherical symmetry or staticity, similar examples can be constructed in many spacetimes (e.\,g.\ in any time-symmetric one ($K\equiv0$) with compactly supported or suitably decaying matter distribution).

Clearly, the center of mass of the \emph{canonical} time-slice $\lbrace{t=0\rbrace}$ of the Schwarz\-schild spacetime is the coordinate origin, $\vec{z}_{\text{ADM}}=\vec{z}_{\text{CMC}}=\vec{0}$. Other time-slices of the Schwarz\-schild spacetime can serve as a guideline for gaining a better understanding of how different families of observers near infinity perceive the spacetime and its center of mass, in particular.

Here, we will focus on what we call \emph{graphical time-slices} $\slice$ arising as graphs of smooth functions $T:\R^{3}\setminus B_{\nicefrac{m}{2}}(\vec{0})\to\R$ \lq over\rq\ the canonical time-slice (in time-direction), i.\,e.\ 
\begin{align*}\labeleq{ST}
\slice:=\lbrace t=T(\vec{x})\,|\,\vec{x}\in \R^{3}\setminus B_{\frac{m}{2}}(\vec{0})\rbrace.
\end{align*}
To most easily comply with the asymptotic decay conditions specified in Section \ref{general}, we will assume that $T=\mathcal{O}_{k}(r^{0})$ as $r\to\infty$, with $k\geq3$. Let $y^{i}:=x^{i}\vert_{\slice}$ denote the induced coordinates on $\slice$. In those coordinates, the metric $\g[T]$ induced on $\slice$ reads
\begin{align*}\labeleq{Smetric}
\g[T]_{ij}:=\g[T](\partial*_{y^{i}},\partial*_{y^{j}})=\g[m](\partial*_{x^{i}},\partial*_{x^{j}})-\schwN[m]^{2}\,T_{,i}\,T_{,j}=:\g[m]_{ij}-\schwN[m]^{2}\,T_{,i}\,T_{,j}
\end{align*}
with $\schwN[m]$ is as in Definition \ref{schwarzschild_spaccetime}. Here and in the following, we will only use partial derivatives with respect to the (spacetime) coordinates $x^{i}$; partial derivatives with respect to the coordinates $y^{i}$ adapted to the time-slice $\slice$ will be expressed in terms of partial derivatives with respect to $x^{i}$ via
\begin{align*}
\partial*_{y^{i}}=\partial*_{x^{i}}+T_{,i}\,\partial*_{t}.
\end{align*}
A straightforward computation shows that $(\R^{3}\setminus B_{\nicefrac{m}{2}}(\vec{0}),\g[T],\vec{y}\,)$ is $C^{k-1}$-asymptotically Schwarzschildean of order $1+\ve=2$ with deviation 
\begin{align*} \devi_{ij}:= {-}T_{,i}\,T_{,j}.\end{align*}
 Moreover, the ADM-mass of $(\R^{3}\setminus B_{\nicefrac{m}{2}}(\vec{0}),\g[T],\vec{y}\,)$ is $m$. To reduce notational complexity, we will drop the left upper index $m$ and understand all covariant derivatives and norms to be with respect to the metric $\g[m]$ unless stated otherwise. The future pointing unit normal $\Tnormal$ of $(\slice,\g[T])\hookrightarrow(\DoATensor{\mathfrak M}[m],\DoATensor{\mathfrak g}[m])$ is
 \begin{align*}
 \Tnormal=\frac{\partial*_{t}+N^{2}\,\nabla T}{N\sqrt{1-N^{2}\,\vert\nabla T\vert^{2}}},
 \end{align*}
 where $\nabla$ and $\vert\cdot\vert$ denote the covariant derivative and the norm induced on $\R^{3}\setminus B_{\nicefrac{m}{2}}(\vec{0})$ by the metric $\g[m]$, respectively. Accordingly, the second fundamental form of $(\slice,\g[T])\hookrightarrow(\DoATensor{\mathfrak M}[m],\DoATensor{\mathfrak g}[m])$ is given by
 \begin{align*}
 \K_{ij}=\frac{T_{,i}N_{,j}+N_{,i}T_{,j}+N \,\nabla^{2}_{ij}T-N^{3}\,\vert\nabla T\vert^{2}\,T_{,i}T_{,j}}{\sqrt{1-N^{2}\,\vert\nabla T\vert^{2}}},
 \end{align*}
 where $\nabla^{2}T$ denotes the covariant Hessian of $T$ with respect to $\g[m]$. From this, one computes that the ADM-linear momentum $\linmomentum$ of $\slice$ vanishes (see Remark \ref{ADMmom}), so that the ADM-energy-momentum-$4$-vector of $(\slice,\g[T],\K)$ coincides with the one of the canonical time-slice. Moreover, the ADM-angular momentum of $(\slice,\g[T],\K)$ vanishes as will be discussed elsewhere. Also, the induced mean curvature $\Ht[T]=\tr[T]\,\K$ satisfies
 \begin{align*}
 \Ht[T]=\laplace[\delta]T+\mathcal{O}_{k-2}(r^{-3})
 \end{align*}
 which means that the maximal slicing condition $\Ht=0$ implies $\laplace[\delta]T=\mathcal{O}_{k-2}(r^{-3})$. From $T=\mathcal{O}_{k}(r^{0})$, we deduce that the maximal slicing condition implies $T=\mathcal{O}_{k}(r^{-1})$ by the faster fall-off trick of the first author \cite[Thm~1.4.10]{Cederbaum_newtonian_limit}.
 
When evaluating the ADM-center of mass surface integral from Definition \ref{ADMCoM} on a finite coordinate sphere with respect to the $\vec{y}$-coordinates in $\slice$, we find
\begin{align*}
(\centerz[T\!](\mathbb{S}^{2}_{r}(\vec{0})))_{l}={}&\frac{1}{16\pi m}\sum_{i,j=1}^{3}\,\int_{\mathbb{S}^{2}_{r}(\vec{0})}\left\{ (\devi[]_{ij,i}-\devi[]_{ii,j})\frac{x_{j}x_{l}}{r}-(\devi[]_{il}\frac{x_{i}}{r}-\devi[]_{ii}\frac{x_{l}}{r})\right.\\
&\left.\phantom{ (\devi[]_{ij,i}-\devi[]_{ii,j})\frac{x_{j}x_{l}}{r}}\quad+\mathcal{O}_{k-2}(r^{-3})\right\}\d A(\vec{x})\\\labeleq{ADMCoMT}
={}&\frac{1}{16\pi m}\int_{\mathbb{S}^{2}_{r}(\vec{0})}\lbrace (\div\devi[])(\nu)x_{l}-\nu(\tr\devi[])x_{l}-\devi[](\nu,\partial_{l})+\tr\devi[]\,\nu_{l}\rbrace\d A(\vec{x})\\
&+\mathcal{O}(r^{-1}),
\end{align*}
where $\nu$ and $\div$ are the Euclidean unit normal to $\mathbb{S}^{2}_{r}(\vec{0})$ and the Euclidean divergence, respectively. We dropped the left upper index $T$ on the deviation $\devi$ for notational convenience and because this formula in fact applies to any deviation $\devi[]=\mathcal{O}_{k}(r^{-2})$ with $k\geq2$, see also \eqref{York_Ex_Integral_2} where $\overline\erA=-2t\erA$. As discussed in Section \ref{intro}, Corvino and Wu \cite{Corvino-Wu} observed that the ADM-surface integrals correspond to volume integrals over $\sc\,\vec{x}$. Here, this can be explicitly seen by applying the Euclidean divergence theorem:
\begin{align*}
\centerz[T\!](\mathbb{S}^{2}_{r}(\vec{0}))={}&\centerz[T\!](\mathbb{S}^{2}_{R_{0}}(\vec{0}))+\frac{1}{16\pi m}\int_{B_{r}(\vec{0})\setminus B_{R_{0}}(\vec{0})}\lbrace \div(\div\devi[]-\nabla\tr\devi[])\rbrace\,\vec{x}\,\d V+\mathcal{O}(r^{-1})
\end{align*}
for any fixed $R_{0}>\nicefrac{m}{2}$. In terms of $T$, this reads
\begin{align*}\labeleq{laplaHess}
\centerz[T\!](\mathbb{S}^{2}_{r}(\vec{0}))={}&\centerz[T\!](\mathbb{S}^{2}_{R_{0}}(\vec{0}))+\frac{1}{16\pi m}\int_{B_{r}(\vec{0})\setminus B_{R_{0}}(\vec{0})} \lbrace (\laplace T)^{2}-\vert\nabla^{2}T\vert^{2}\rbrace\vec{x}\,\d V\\
&+\mathcal{O}(r^{-1}).
\end{align*}
In particular, if $T$ gives rise to maximal slicing and thus as discussed above satisfies $T=\mathcal{O}_{k}(r^{-1})$ with $k\geq3$, the ADM- and thus also the coordinate CMC-center of mass converges. It is hence impossible to construct examples of maximally sliced bounded graph time-slices with diverging center of mass.

To construct an example with diverging center of mass, we pick
\begin{align*}
T:\R^{3}\setminus B_{\frac{m}{2}}(\vec{0})\to\R:\vec{x}\mapsto\sin(\ln r)+\frac{\vec{u}\cdot\vec{x}}{r}=\mathcal{O}_{\infty}(r^{0})
\end{align*}
for a fixed vector $\vec{0}\neq\vec{u}\in\R^{3}$. One computes from \eqref{laplaHess} that
\begin{align*}
\centerz[T\!](\mathbb{S}^{2}_{r}(\vec{0}))={}&\centerz[T\!](\mathbb{S}^{2}_{R_{0}}(\vec{0}))-\frac{1}{3m}(\cos(\ln r)-\cos(\ln R_{0}))\vec{u}
\end{align*}
which diverges as $r\to\infty$. Hence, the ADM- and thus also the coordinate CMC-center diverge \cite[Cor.~4.2]{nerz2013timeevolutionofCMC}.\pagebreak[3]

Finally, let us summarize the above discussion, see also Figure \ref{table}.
\begin{example}[Divergent CoM $2$]\label{Ex2}
Let $m>0$ and let $$T:\R^{3}\setminus B_{\frac{m}{2}}(\vec{0})\to\R:\vec{x}\mapsto\sin(\ln r)+\frac{\vec{u}\cdot\vec{x}}{r}$$ for a fixed vector $\vec{0}\neq\vec{u}\in\R^{3}$. Let $\g[T]$ be the metric induced on $\slice=\lbrace t=T(\vec{x})\rbrace$ by the ambient Schwarzschild spacetime $(\DoATensor{\mathfrak M}[m],\DoATensor{\mathfrak g}[m])$. The Riemannian manifold $(\slice,\g[T],\vec{y}\,)$ is $C^{\infty}$-asymptotically Schwarzschildean of the critical order $1+\ve=2$ and neither its coordinate CMC- nor its ADM-center of mass of $(\slice,\g[T])$ are well-defined.
\end{example}

Similar considerations allow us to construct $C^{\infty}$-asymptotically Schwarzschildean Riemannian metrics of the critical order $1+\ve=2$ with arbitrarily prescribed mass $m>0$ and ADM- and coordinate CMC-centers of mass $\vec{z}$, see Section \ref{subschwarz} and Figure \ref{table}. 
\pagebreak[3]

\section{Isolated gravitating systems with prescribed center of mass}\label{prescribed}
In Sections \ref{NG}, \ref{York}, and \ref{schwarz}, we have seen that various notions of center of mass diverge for specific asymptotically spherically symmetric examples decaying at the critical rates $\rho=\mathcal{O}(r^{-4})$ in NG and $\sc=\mathcal{O}(r^{-4})$ in GR, respectively, with corresponding deviation from the spherically symmetric case of critical orders, respectively, namely $U=-\frac{m}{r}+\mathcal{O}_{2}(r^{-2})$ in NG and $\g=\g[m]+\mathcal{O}_{2}(r^{-2})$ in GR. In this section, we will construct similar asymptotically spherically symmetric examples of the same critical orders of decay which possess a prescribed mass $m>0$ and center of mass $\vec{z}$. For a discussion of previous results on prescribed centers of mass in GR, see Section \ref{Introprescribed}.

Again, we will treat the Newtonian case first in Section \ref{subNG}, followed by relativistic examples based on an understanding of the motion of the CMC-center of mass, see Section \ref{subYork}, and by one arising as a time-slice in the Schwarzschild spacetime, see Section \ref{subschwarz}. As above, all examples necessarily rely on a violation of asymptotic evenness in the spirit of the Regge-Teitelboim conditions.

\subsection{Prescribed center of mass example in Newtonian gravity}\label{subNG}
Our task is most delicate in NG, as negative powers of the radius $r$ are singular at the origin. We will thus work with a smooth, rotationally symmetric cut-off function $\psi:\R\to\left[0,1\right]$ which vanishes in $B_{1}(\vec{0})$ and is equal to one outside $B_{2}(\vec{0})$. Set 
\begin{align*}\labeleq{psi}
a:=\int_{1}^{2}\frac{\psi(r)}{r^{2}}\,\d r>0.
\end{align*}
Recall that mass and center of mass are defined by \eqref{Nmass} and \eqref{NCoM}, respectively, in the Newtonian context.
If $\vec{z}=\vec{0}$, we set 
\begin{align*}\labeleq{rhom0}
\rho_{m,\vec{0}}:\R^{3}\to\left[0,\infty\right):\vec{x}\mapsto\frac{m \psi(r)}{4\pi (a+\frac{1}{2}) r^{4}}.
\end{align*}
This matter density satisfies $\rho_{m,\vec{0}}=\mathcal{O}(r^{-4})$ and has mass $m$ and center of mass $\vec{0}$. Accordingly, if $\vec{z}\neq\vec{0}$, set
\begin{align*}\labeleq{rhomz}
\rho_{m,\vec{z}}:\R^{3}\to\left[0,\infty\right):\vec{x}\mapsto\rho_{m,\vec{0}}(\vec{x}-\vec{z}).
\end{align*}
Clearly, $\rho_{m,\vec{z}}$ has mass $m$ and center of mass $\vec{z}$ as desired. Also, $\rho_{m,\vec{z}}$ decays at the critical rate $\rho_{m,\vec{z}}=\mathcal{O}(r^{-4})$. Note that $\rho_{m,\vec{z}}$ is not asymptotically even.

\subsection{Prescribed center of mass example related to motion}\label{subYork}
In Section \ref{York}, we constructed examples of $C^{k}$-asymptotically Schwarzschildean Riemannian metrics $(\R^{3}\setminus B_{R}(\vec{0}),\g[f],\vec{x}\,)$ by adding a deviation $\erAt[f]=-2f(r)\,\kY$ to the Schwarzschild metric, where $\kY$ is the York tensor defined in \eqref{Yorktensor} for arbitrary $\vec{P}\in\R^{3}$, and weight function $f=\mathcal{O}_{k}(r^{1-\ve})$ with $k\geq2$ and $\ve>0$. To construct a metric of the same form with prescribed mass $m$ and center of mass $\vec{z}$, we pick $\vec{P}:=\vec{z}$ and choose the constant weight $f(r)\equiv m=\mathcal{O}_{\infty}(r^{1-\ve})$ for $\ve=1$. The Riemannian metric $$\g[m,\vec{z}]:=\g[m]-2\,m\,\kY$$ is clearly $C^{\infty}$-asymptotically Schwarzschildean of the critical order $1+\ve=2$ and has ADM-mass $m$. Moreover, its coordinate CMC- and thus ADM-center of mass can be computed from \eqref{CoMt,f} (with $t=1$) and equals $\vec{z}$ as desired. The decay order $1+\ve=2$ corresponds to a decay of the scalar curvature of the critical rate ${\sc[m,\vec{z}]}=\mathcal{O}(r^{-4})$. The Regge-Teitelboim conditions are not satisfied for ${\g[m,\vec{z}]}$.

\subsection{Prescribed center of mass example in the Schwarzschild spacetime}\label{subschwarz}
In Section \ref{schwarz}, we constructed examples of $C^{k}$-asymptotically Schwarzschildean Riemannian manifolds arising as graphical time-slices of the Schwarzschild spacetime with graph functions $T:\R^{3}\setminus B_{\nicefrac{m}{2}}(\vec{0})\to\R$ of order $T=\mathcal{O}_{l}(r^{0})$, $l\geq3$. To concoct a metric arising in the same manner with prescribed mass $m$ and center of mass $\vec{z}$, we pick the ambient Schwarzschild spacetime of mass $m$. Let
\begin{align*}\labeleq{lambda}
\lambdaex[m,\vec{0}]:=0\quad\text{ and }\quad\lambdaex:=(\frac{15m}{8\vert\vec{z}\vert^{2}})^{\nicefrac{1}{3}}\quad\text{for }\vec{z}\neq\vec{0}.
\end{align*}
We set
\begin{align*}
\Tex:\R^{3}\setminus B_{\frac{m}{2}}(\vec{0})\to\R:\vec{x}\mapsto\lambdaex\,\frac{\vec{x}\cdot\vec{z}}{r}+(\lambdaex\,\frac{\vec{x}\cdot\vec{z}}{r})^{2},
\end{align*}
so that $\Tex=\mathcal{O}_{\infty}(r^{0})$. The induced metric $\g[m,\vec{z}]:=\g[m]-\schwN^{2}(d(\Tex))^{2}$ is thus clearly $C^{\infty}$-asymptotically Schwarzschildean of the order $1+\ve=2$ which again corresponds to the critical rate of decay of the scalar curvature, $\sc[m,\vec{z}]=\mathcal{O}(r^{-4})$. Moreover, it is obvious that $\g[m,\vec{z}]$ has ADM-mass $m$.  If $\vec{z}=\vec{0}$, we recover the canonical time-slice or $\Tex[m,\vec{0}]\equiv0$ and thus verify that $\centerz[m,\vec{0}]_{\text{ADM}}=\vec{0}$. To see that ${\g[m,\vec{z}]}$ has ADM- and thus coordinate CMC-center of mass $\vec{z}$ in case $\vec{z}\neq\vec{0}$, we use \eqref{ADMCoMT} applied to $\devi[T]=-dT^{2}$ with $T=\Tex$ and obtain
\begin{align*}
\centerz[m,\vec{z}]_{\text{ADM}}={}&\frac{1}{16\pi m}\lim_{r\to\infty}\sum_{i=1}^{3}\,\int_{\mathbb{S}^{2}_{r}(\vec{0})}(T_{,i}-T_{,ij}\,x^{j})\,T_{,i}\,\frac{\vec{x}}{r}\,\d A_{r}(\vec{x})\\
={}&\frac{\lambda^{3}}{\pi m}\int_{\mathbb{S}^{2}_{1}(\vec{0})}(\text{even}+\vert\vec{z}\vert^{2}\,\vec{z}\cdot\vec{\eta}-(\vec{z}\cdot\vec{\eta})^{3})\,\vec{\eta}\,\d A(\vec{\eta})\\
={}&\frac{\lambda^{3}}{\pi m}\int_{\mathbb{S}^{2}_{1}(\vec{0})}(\vert\vec{z}\vert^{2}\,\vec{z}\cdot\vec{\eta}-(\vec{z}\cdot\vec{\eta})^{3})\,\vec{\eta}\,\d A(\vec{\eta}),
\end{align*}
where we have dropped the upper left indices on $\lambda$ and $\Tex[]$ for notational convenience. The \lq even\rq\ term is even in the sense that its odd part defined as in \eqref{antisymm} vanishes and does thus not contribute to the integral by symmetry considerations. We compute
\begin{align*}
\centerz[m,\vec{z}]_{\text{ADM}}=\frac{\lambda^{3}}{\pi m}(\frac{4\pi}{3}\vert\vec{z}\vert^{2}\vec{z}-\frac{4\pi}{5}\vert\vec{z}\vert^{2}\vec{z})=\frac{8\lambda^{3}}{15m}\vert\vec{z}\vert^{2}\,\vec{z}=\vec{z},
\end{align*}
where we used well-known orthogonality properties of the spherical harmonics as well as the transformation formula applied to a special orthogonal transformation taking $\vec{z}$ to $\vert\vec{z}\vert\,\vec{e}_{1}$, for example, to simplify the computation. Note that the Regge-Teitelboim conditions are again not satisfied for $\g[m,\vec{z}]$.
\pagebreak[3]

{\clearpage\vfill\small\begin{figure}[ht]
\begin{tabular}{c|c|c}
&&\\[-.6em]
mass and CoM expressions & NG, s.\ Sec.\ \ref{NG} & GR, s.\ Sec.\ \ref{general}\\[.5em]\hline\hline
&&\\[-.2em]
mass surface integrals & $m_{\text N}(\Sigma)$  & $m_{\text{ADM}}(\sphere^{2}_{r})$\\[.5em]\hline
&&\\[-.2em]
mass volume integrals & $\int\rho\,\d V$ & $\int \scalar\,\d V$\\[.5em]\hline
&&\\[-.2em]
total mass & $m$ & $m_{\text{ADM}}$\\[.5em]\hline\hline
&&\\[-.2em]
CoM surface integrals & $\centerz_{\text N}(\Sigma)$ & $\centerz_{\text{ADM}}(\sphere^{2}_{r})$, $\centerz_{E}(\Sigma_{\sigma})$\\[.5em]\hline
&&\\[-.2em]
CoM volume integrals & $\int\rho\,\vec{x}\,\d V$ & $\int \scalar\,\vec{x}\,\d V$\\[.5em]\hline
&&\\[-.2em]
total coordinate CoM & $\centerz$ & $\centerz_{\text{ADM}}=\centerz_{\text{CMC}}$\\[.5em]\hline
&&\\[-.2em]
abstract center of mass & level sets of $U$ & CMC-foliation \\[.5em]\hline\hline
&&\\[-.2em]
sufficient decay for & $\rho=\mathcal{O}(r^{-4-\varepsilon})$ & $\scalar=\mathcal{O}(r^{-4-\ve})$\\[.4em]
convergence of the CoM & $\Rightarrow U=-\frac{m}{r}+\mathcal{O}_{2}(r^{-2-\ve})$ & e.\,g.\ $\g_{ij}=\g[m]_{ij}+\mathcal{O}_{2}(r^{-2-\ve})$\\[.6em]\hline
&&\\[-.2em]
critical order & $\rho=\mathcal{O}(r^{-4})$ & $\scalar=\mathcal{O}(r^{-4})$\\[.3em]
of decay &  $\Rightarrow U=-\frac{m}{r}+\mathcal{O}_{2}(r^{-2})$ &  e.\,g.\ $\g_{ij}=\g[m]_{ij}+\mathcal{O}_{2}(r^{-2})$\\[.6em]\hline
&&\\[-.2em]
traditional decay & asymptotic evenness & Regge-Teitelboim \\
assumption & & conditions\\[.6em]\hline
&&\\[-.2em]
explicit examples & $\rho_{\vec{u}}(\vec{x})=\frac{1}{r^{4}}(\vert\vec{u}\vert+\frac{\vec{x}\cdot\vec{u}}r)$ & $\g=\g[m]-2\sin(\ln r)\,\kY$\\[.3em]
with diverging CoM & with $\vec{0}\neq\vec{u}\in\R^{3}$, Sec.\ \ref{NG}, & $\kY$ York tensor, Sec.\ \ref{York} \\[.6em]
& & and graph in Schwarz-\\[.2em]
& & schild spacetime: \\[.2em]
& & $t=T(\vec{x})=\sin(\ln r)+\frac{\vec{x}\cdot\vec{u}}{r}$\\[.3em]
& & with $\vec{0}\neq\vec{u}\in\R^{3}$, Sec.\ \ref{schwarz} \\[.6em]\hline
&&\\[-.2em]
explicit examples & $\rho_{m,\vec{z}} \propto \frac{m}{4\pi \vert\vec{x}-\vec{z}\vert^{4}}$ & $\g[m,\vec{z}]=\g[m]-2\,m\,\kY$ \\[.3em]
with prescribed mass $m$ & cut-off at $\vec{z}$, Sec.\ \ref{subNG} & with $\kY$ York tensor \\[.2em]
and CoM $\vec{z}$ &  & for $\vec{P}=\vec{z}$, Sec.\ \ref{subYork} \\[.6em]
and of critical order & & and graph in Schwarz-\\[.2em]
of decay & & schild spacetime,  Sec.\ \ref{subschwarz}:\\[.2em]
& & $t=\Tex[](\vec{x})=\lambda\,\frac{\vec{x}\cdot\vec{z}}{r}+(\lambda\,\frac{\vec{x}\cdot\vec{z}}{r})^{2}
$ \\[.2em]
& & with $\lambdaex=(\nicefrac{15m}{8\vert\vec{z}\vert^{2}})^{\nicefrac{1}{3}}$\\[.6em]
\end{tabular}
\caption{Different expressions for mass and center of mass in Newtonian gravity (NG) and general relativity (GR) as well as decay considerations ($\ve>0)$. $\g[m]$ is the Riemannian Schwarzschild metric of mass $m>0$. \label{table}}
\end{figure}\vfill}
\clearpage\bibliography{./bib}\bibliographystyle{alpha}
\pagebreak[1]$\,\!$\nopagebreak\vfill
\end{document}